\def\dil{\Lam}
\def\capset{\Gamma}
\def\dualset{\capset^\ast}
\def\beaN{\setlength{\arraycolsep}{0.0em}\begin{eqnarray*}}
\def\eeaN{\end{eqnarray*}\setlength{\arraycolsep}{5pt}}
\def\bea{\setlength{\arraycolsep}{0.0em}\begin{eqnarray}}
\def\eea{\end{eqnarray}\setlength{\arraycolsep}{5pt}}
\def\be{\begin{equation}}
\def\ee{\end{equation}}
\def\dm{n}
\def\Rd{\RR^\dm}
\def\Zd{\ZZ^\dm}
\def\Cd{\CC^\dm}
\def\Td{\TT^\dm}
\def\CC{\mathbb{C}}
\def\NN{\mathbb{N}}
\def\RR{\mathbb{R}}
\def\ZZ{\mathbb{Z}}
\def\TT{\mathbb{T}}
\def\norm#1{\|#1\|}
\def\disp{\displaystyle}
\def\bks{\backslash}
\def\alp{\alpha}                
\def\gam{\gamma}                \def\Gam{\Gamma}
\def\eps{\varepsilon}
\def\tet{\theta}
               \def\Lam{\Lambda}
\def\ome{\omega}                \def\Ome{\Omega}
\def\hatf{{\widehat f}}
\def\hatphi{{\widehat\phi}}
\def\hat#1{{\widehat {#1}}}
\author{Youngmi Hur\footnotemark[2]
\and Kasso A.~Okoudjou\footnotemark[3]}
\title{Scaling Laplacian Pyramids}
\begin{document}

\maketitle
\slugger{mms}{xxxx}{xx}{x}{x--x}

\renewcommand{\thefootnote}{\fnsymbol{footnote}}
\footnotetext[2]{Department of Mathematics, Yonsei University, Seoul 120-749, Korea (yhur@yonsei.ac.kr). Research supported in part by Yonsei New Faculty Research Seed Money Grant.} 
\footnotetext[3]{Department of Mathematics, University of Maryland, College Park, MD 20742, USA (kasso@math.umd.edu).  Research supported in part by a RASA from the Graduate School of UMCP and by a grant from the Simons Foundation ($\# 319197$ to Kasso Okoudjou).}

\begin{abstract}
Laplacian pyramid based Laurent polynomial (LP$^2$) matrices  are generated by Laurent polynomial column vectors and  have long been studied in connection with Laplacian pyramidal algorithms in Signal Processing. In this paper,  we investigate when such matrices are scalable, that is when right multiplication by  Laurent polynomial diagonal matrices results in paraunitary matrices. The notion of scalability has recently been  introduced in the context of finite frame theory and can be considered as a preconditioning method for frames. This paper significantly extends the current research on scalable frames to the setting of polyphase representations of filter banks. Furthermore, as applications of our main results we propose new construction methods for tight wavelet filter banks and tight wavelet frames.
\end{abstract}

\begin{keywords}
Fej\'er-Riesz factorization,
Laplacian pyramids, 
Matrices with Laurent polynomial entries, 
Scalable frames,
Wavelets
\end{keywords}

\begin{AMS}
11C99, 
42C15, 
42C40 
\end{AMS}

\pagestyle{myheadings}
\thispagestyle{plain}
\markboth{Youngmi Hur and Kasso A.~Okoudjou}{Scaling Laplacian Pyramids}

\section{Introduction}
\label{S:Intro}
Let $\Td$ be the set of all $z:=[z_1,\dots,z_\dm]^T\in\Cd$ with $|z_i|=1$, for all $i=1,\dots,n$. Here and below, $T$ is used to represent the matrix transpose. $\mathcal{M}_{q,p}(z)$ will denote the set of all $q\times p$ matrices whose entries are Laurent polynomials in $z\in \Td$ with real coefficients, and $\mathcal{M}_{q}(z):=\mathcal{M}_{q,1}(z)$ will denote the set of all column vectors of length $q$. In the sequel, unless specified otherwise, we assume that all the relations (such as identities, inequalities) among Laurent polynomial matrices in $\mathcal{M}_{q,p}(z)$  hold true for all $z\in\Td$.  

We are mainly interested in a family of Laurent polynomial matrices arising from the study of Laplacian pyramidal algorithms \cite{BA} using the polyphase representation \cite{V}. 
Given an integer  $q\ge 2$, consider a nonzero column vector with Laurent polynomial entries $H_0(z), H_1(z),\dots, H_{q-1}(z)$, denoted by 
$${\tt H}(z):=[H_0(z),H_1(z),\dots,H_{q-1}(z)]^T\in \mathcal{M}_{q}(z).$$ 
Note that $H_j(z)$, for $j=0,\dots, q-1$, is used to denote the $(j+1)$-th entry of the column vector ${\tt H}(z)$.
 We define
$$
\Phi_{\tt H}(z):=\left[\begin{array}{cc} 
{\tt H}(z)& {\tt I}- {\tt H}(z){\tt H}^\ast(z)\\
\end{array}\right]\in \mathcal{M}_{q\times (q+1)}(z),
$$
where ${\tt I}$ is the identity matrix and ${\tt H}^\ast(z)$ is the conjugate transpose of ${\tt H}(z)$, which is given as
$${\tt H}^\ast(z):=\overline{{\tt H}(z)}^T=[\overline{H_0(z)},\overline{H_1(z)},\dots,\overline{H_{q-1}(z)}]=[H_0(z^{-1}),H_1(z^{-1}),\dots,H_{q-1}(z^{-1})].$$
Here, $z^{-1}:=[z_1^{-1}, \dots, z_n^{-1}]^T \in \Td$, for $z=[z_1, \dots, z_n]^T\in \Td$. 
It is readily seen that 
\begin{equation}\label{lp2}
\Phi_{\tt H}(z)\left[\begin{array}{c} 
{\tt H}^\ast(z)\\ 
{\tt I}\\
\end{array}\right]={\tt I},\quad \forall z\in\Td.
\end{equation} Consequently, ${\rm rank\,}\Phi_{\tt H}(z)=q$ for all $z\in\Td$.  
From now on we shall refer to the matrix $\Phi_{\tt H}(z)$  as the {\it LP$^2$ matrix (of order $q$) associated with ${\tt H}(z)$}. 
The LP$^2$ matrices have been studied in connection with various wavelet constructions \cite{DV1,HR1,Hur,HPZ}.

The LP$^2$ matrix $\Phi_{\tt H}(z)$ is said to be {\footnote[5]{The notion of paraunitary is typically considered for square matrices, and we extend the notion in this paper to more general rectangular matrices.}{\it paraunitary,}} if  
\begin{equation}\label{paraunitary}
\Phi_{\tt H}(z)\Phi_{\tt H}^\ast(z)={\tt I}.
\end{equation}  
The class of paraunitary LP$^2$ matrices is fundamentally related to the theory of tight filter banks, \cite{DV1,HR1}. Indeed, recall that 
from any pair of matrices ${\tt A}(z)\in \mathcal{M}_{q\times p}(z)$, ${\tt B}(z)\in \mathcal{M}_{p\times q}(z)$ such that ${\tt A}(z){\tt B}(z)={\tt I}$, a {\it filter bank} satisfying the perfect reconstruction property can be constructed (see, e.g., \cite{V}). Moreover, when ${\tt A}(z)$ is paraunitary, i.e. ${\tt A}(z){\tt A}^\ast(z)={\tt I}$, the pair $({\tt A}(z), {\tt A}^\ast(z))$ gives rise to a {\it tight filter bank}. The interest in choosing  ${\tt A}(z)$ specifically as an LP$^2$ matrix $\Phi_{\tt H}(z)$ stems from the fact that all the filters in $\Phi_{\tt H}(z)$ are generated from a single filter associated with the vector ${\tt H}(z)$ \cite{Hur,HPZ}.

Furthermore, it is easy to see that the design of tight filter bank from a paraunitary LP$^2$ matrix $\Phi_{\tt H}(z)$ is equivalent to the existence of a column matrix ${\tt H}(z)$ such that   ${\tt H}^\ast(z){\tt H}(z)=1$, or equivalently, $\sum_{k=0}^{q-1}|H_k(z)|^2=1\, \textrm{for\, all}\, z \in \Td.$ In particular, unless the norm of the column vector ${\tt H}(z)$ is identically constant, the associated LP$^2$ matrix cannot be paraunitary. This is the case, for example, if we choose ${\tt H}(z)=[1,(1+z^{-1})/2]^T/\sqrt{2}$ (cf. Example 1 in Section~\ref{subS:example} with $k=1$). It is then natural to  ask whether  a column vector ${\tt H}(z)$ such that   ${\tt H}^\ast(z){\tt H}(z)\neq 1$ can be modified into a new column vector ${\tt \widetilde{H}}(z)$ for which ${\tt \widetilde{H}}^\ast(z){\tt \widetilde{H}}(z)= 1$ leading to a paraunitary LP$^2$ matrix $\Phi_{\tt  \widetilde{H}}(z)$. This is a special case of a more general question that asks  whether one can find matrices $M(z)$ whose entries are Laurent polynomials  such that 
$\Phi_{\tt H}(z)M(z)$ is paraunitary, i.e. $$[\Phi_{\tt H}(z)M(z)][M^\ast(z)\Phi_{\tt H}^\ast(z)]={\tt I}.$$

In the scalar case and from a numerical linear algebra point of view these types of questions have been extensively studied
in the framework of matrix preconditioning \cite{bb, Kchen}.   In the context of finite frames \cite{CK}  a special case of this question was considered under the term of \emph{scalable frames} which were introduced in \cite{KOPT}. In this setting one seeks nonnegative (scalar-valued) matrices $D$ that would make a frame with synthesis (real) matrix $\Phi$, a tight frame, i.e., one which satisfies $$\Phi D^2 \Phi^T=I.$$ More investigations on scalable frames appeared in \cite{cc12, cklmns12, kop}.

The goal of this paper is twofold:\newline
\noindent $\bullet$ On the one hand (see Section~\ref{S:LPmain}) we extend the concept of scalability to matrices with Laurent polynomials. We say that the matrix $\Phi_{\tt H}(z)$ is {\it scalable} if there exists a Laurent polynomial diagonal matrix $M(z)$ such that $\Phi_{\tt H}(z)M(z)$ is paraunitary. 
 To this end we first investigate the more general question of the existence of  a Laurent polynomial diagonal matrix $B(z)$ such that $\Phi_{\tt H}(z)B(z)\Phi_{\tt H}^\ast(z)={\tt I}$, and determine when such diagonal matrix $B(z)$ can be written as $B(z)=M(z) M^{\ast}(z)$ for some Laurent polynomial diagonal matrix $M(z)$.  We then describe when such a diagonal matrix $M(z)$ can be written as $M(z)={\rm diag }([m(z), 1, \dots, 1])$ where $m(z)$ is a Laurent polynomial. In the univariate case ($n=1$) we completely settle the problem  by relying on the Fej\'er-Riesz factorization Lemma \cite{Fejer,Riesz}. \newline
\noindent $\bullet$ On the other hand we use these scalability results to develop a new methodology for constructing tight wavelet filter banks and tight wavelet frames in Section~\ref{S:wavelet}.  We recall that a filter bank is typically referred to as a {\it wavelet filter bank} if each of its analysis and synthesis banks has exactly one lowpass filter and the rest are all highpass filters. We will review the fundamentals of filter banks in general, and wavelet filter banks in particular, in Section~\ref{S:wavelet}. One of our main results in this section concerns the  transformation of univariate  non-tight, wavelet frames into tight wavelet ones in such a way that the resulting refinable functions preserve most of the properties of the original refinable functions.  In a forthcoming work we hope to use multi-dimensional versions of the Fej\'er-Riesz factorization Lemma (\cite{GeWo, GL}) to extend our construction to the multi-dimensional cases.

\section{Scaling LP$^2$ matrices}
\label{S:LPmain}
In this section, we present our main results about LP$^2$ matrices, the first of which says that, for any LP$^2$ matrix $\Phi_{\tt H}(z)\in \mathcal{M}_{q\times (q+1)}(z)$, there exists a diagonal matrix $B(z)\in\mathcal{M}_{q+1,q+1}(z)$ such that 
\begin{equation}
\label{eqn:paraunitary}
\Phi_{\tt H}(z)B(z)\Phi_{\tt H}^\ast(z)={\tt I}.
\end{equation}
In fact, we  give an explicit formula for the matrix $B(z)$. But, 
before we state the result, we need the following set up, and we refer to \cite{kop} for details. Given an integer $q\geq 2$, let $d:=(q-1)(q+2)/2$ and define a function $F: \CC^q \to \CC^d$ by

$$F(x):=\left[\begin{array}{c} 
F_0(x)\\ 
F_1(x)\\
\vdots\\
F_{q-1}(x)
\end{array}\right]\in\CC^d,\quad 
\hbox{ for }
x=\left[\begin{array}{c} 
x_0\\ 
x_1\\
\vdots\\
x_{q-1}
\end{array}\right]\in\CC^q,$$
where $F_0(x)\in\CC^{q-1}$ and $F_k(x)\in\CC^{q-k}$, for $k=1,\dots,q-1$, are defined as
$$
F_0(x)=\left[\begin{array}{c} 
|x_0|^2-|x_1|^2\\ 
|x_0|^2-|x_2|^2\\
\vdots\\
|x_0|^2-|x_{q-1}|^2
\end{array}\right],\quad
F_k(x)=\left[\begin{array}{c} 
x_{k-1}\overline{x_k}\\ 
x_{k-1}\overline{x_{k+1}}\\
\vdots\\
x_{k-1}\overline{x_{q-1}}
\end{array}\right].
$$

Let ${\tt H}(z)=[H_0(z),H_1(z),\dots,H_{q-1}(z)]^T$. 
Following  \cite{kop}, we see that a diagonal Laurent polynomial matrix $B(z)$ with diagonals  $b_0(z), b_1(z), \dots, b_q(z)$   solves  
\begin{equation}
\label{eq:KOPT}
\Phi_{\tt H}(z) {\rm diag }([b_0(z), b_1(z), \dots, b_q(z)])\Phi_{\tt H}^\ast(z)={\tt I}
\end{equation} 
(or equivalently,~(\ref{eqn:paraunitary})) 
if and only if they satisfy 
\begin{eqnarray}
  &{}&|H_0(z)|^2b_0(z)+(1-|H_0(z)|^2)^2b_1(z)\label{eq:inhomo}\\
  &{}&\quad\quad +\,|H_0(z)|^2|H_1(z)|^2b_2(z)+\dots+|H_0(z)|^2|H_{q-1}(z)|^2b_q(z)=1\nonumber
\end{eqnarray}
and
\begin{equation}
\label{eq:FPhi}
F(\Phi_{\tt H}(z))\left[\begin{array}{c} 
b_0(z)\\ 
b_1(z)\\
\vdots\\
b_q(z)
\end{array}\right]=0
\end{equation}
where $F(\Phi_{\tt H}(z))\in\mathcal{M}_{d,q+1}(z)$ is obtained by applying the function $F$ to the column vectors of $\Phi_{\tt H}(z)$. 

 We can now state the main result of this section.
\begin{theorem}
\label{thm:N(z)}
Let $\Phi_{\tt H}(z)$ be an LP$^2$ matrix associated with ${\tt H}(z)\in \mathcal{M}_{q}(z)$. Then we have $$\Phi_{\tt H}(z) {\rm diag }([2-{\tt H}^\ast(z){\tt H}(z), 1, \dots, 1])\Phi_{\tt H}^\ast(z)={\tt I}.\qquad\endproof$$
\end{theorem}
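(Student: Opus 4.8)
The plan is to prove the identity by a direct block computation, the only substantive input being that ${\tt H}^\ast(z){\tt H}(z)$ is a \emph{scalar} Laurent polynomial. Write $s(z):={\tt H}^\ast(z){\tt H}(z)=\sum_{k=0}^{q-1}|H_k(z)|^2$, so that $B(z)={\rm diag}([2-s(z),1,\dots,1])$ simply rescales the first column of $\Phi_{\tt H}(z)$ by $2-s(z)$ and leaves the remaining $q$ columns untouched. Since the first column of $\Phi_{\tt H}(z)$ is ${\tt H}(z)$ and the remaining block is ${\tt I}-{\tt H}(z){\tt H}^\ast(z)$, I would first record
$$\Phi_{\tt H}(z)B(z)=\left[\begin{array}{cc}(2-s(z))\,{\tt H}(z)& {\tt I}-{\tt H}(z){\tt H}^\ast(z)\end{array}\right].$$
Because ${\tt H}(z){\tt H}^\ast(z)$ is Hermitian, we have $\big({\tt I}-{\tt H}(z){\tt H}^\ast(z)\big)^\ast={\tt I}-{\tt H}(z){\tt H}^\ast(z)$, and hence
$$\Phi_{\tt H}^\ast(z)=\left[\begin{array}{c}{\tt H}^\ast(z)\\ {\tt I}-{\tt H}(z){\tt H}^\ast(z)\end{array}\right].$$
Multiplying the conformable blocks then yields
$$\Phi_{\tt H}(z)B(z)\Phi_{\tt H}^\ast(z)=(2-s(z))\,{\tt H}(z){\tt H}^\ast(z)+\big({\tt I}-{\tt H}(z){\tt H}^\ast(z)\big)^2.$$

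The key step is to simplify the square using the scalar identity ${\tt H}^\ast(z){\tt H}(z)=s(z)$. Expanding and pulling the scalar $s(z)$ through the product ${\tt H}(z)\,{\tt H}^\ast(z){\tt H}(z)\,{\tt H}^\ast(z)$ gives
$$\big({\tt I}-{\tt H}(z){\tt H}^\ast(z)\big)^2={\tt I}-2\,{\tt H}(z){\tt H}^\ast(z)+{\tt H}(z)\,s(z)\,{\tt H}^\ast(z)={\tt I}-(2-s(z))\,{\tt H}(z){\tt H}^\ast(z).$$
Substituting this back, the two copies of $(2-s(z))\,{\tt H}(z){\tt H}^\ast(z)$ cancel and one is left with ${\tt I}$, which is exactly the claimed identity.

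I do not expect any serious obstacle here beyond careful bookkeeping of the blocks; the one point that must be used explicitly—and that is precisely what singles out the scalar $2-s(z)$—is that ${\tt H}^\ast(z){\tt H}(z)$ is scalar, so it commutes out of ${\tt H}({\tt H}^\ast{\tt H}){\tt H}^\ast=s\,{\tt H}{\tt H}^\ast$. As an independent consistency check, one may instead verify that the choice $b_0(z)=2-s(z)$ and $b_1(z)=\dots=b_q(z)=1$ satisfies the inhomogeneous equation~(\ref{eq:inhomo}) together with the homogeneous system~(\ref{eq:FPhi}); this recovers the same conclusion through the characterization of solutions to~(\ref{eq:KOPT}) borrowed from \cite{kop}, and reassures that the formula for $B(z)$ is not merely one solution among many but indeed satisfies both constraints.
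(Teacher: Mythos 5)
Your proof is correct, and it takes a genuinely different route from the paper's. You verify the identity by direct block multiplication: the only substantive ingredient is that ${\tt H}^\ast(z){\tt H}(z)=s(z)$ is a scalar, so that $\bigl({\tt H}(z){\tt H}^\ast(z)\bigr)^2=s(z)\,{\tt H}(z){\tt H}^\ast(z)$, whence $\bigl({\tt I}-{\tt H}(z){\tt H}^\ast(z)\bigr)^2={\tt I}-(2-s(z))\,{\tt H}(z){\tt H}^\ast(z)$ and the two terms cancel to leave ${\tt I}$; all blocks are conformable and the para-Hermitian step $\bigl({\tt I}-{\tt H}{\tt H}^\ast\bigr)^\ast={\tt I}-{\tt H}{\tt H}^\ast$ is valid, so there is no gap. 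The paper argues differently: it invokes the characterization from \cite{kop} by which the identity (\ref{eq:KOPT}) is equivalent to the inhomogeneous condition (\ref{eq:inhomo}) together with the homogeneous system (\ref{eq:FPhi}), then computes every entry of $F(\Phi_{\tt H}(z))$ in Lemmas \ref{lemma:F_0} and \ref{lemma:F_k} and checks row by row that the vector $[2-s(z),1,\dots,1]^T$ is annihilated --- exactly the ``consistency check'' you mention at the end but do not carry out. Your argument is shorter and self-contained, needing neither the function $F$ nor the external equivalence. What the paper's longer route buys is reusability: the entry formulas of Lemmas \ref{lemma:F_0} and \ref{lemma:F_k} are precisely what is needed afterwards for the uniqueness result, Theorem \ref{thm:Sq}, via the factorization and rank computation of $\widetilde{F}(\Phi_{\tt H}(z))$ in Lemma \ref{lemma:split}; a pure cancellation argument like yours establishes existence of the scaling but gives no leverage on whether it is the only one.
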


In order to prove Theorem~\ref{thm:N(z)}, we  make some simple but key observations that allow us to find each entry of  the matrix $F(\Phi_{\tt H}(z))$ from the definitions of the function $F$ and the LP$^2$ matrix $\Phi_{\tt H}(z)$. 

\begin{lemma}
\label{lemma:F_0}
For $i=1,\dots,q-1$ and $j=1,\dots,q+1$, the $(i,j)$ entry of $F_0(\Phi_{\tt H}(z))\in \mathcal{M}_{q-1,q+1}(z)$ is
$$
\cases{|H_0(z)|^2-|H_i(z)|^2,& if $j=1$,\cr
(1-|H_0(z)|^2)^2-|H_i(z)|^2|H_0(z)|^2,& if $j=2$, \cr
|H_0(z)|^2|H_i(z)|^2-(1-|H_i(z)|^2)^2,& if $j=i+2$,\cr
(|H_0(z)|^2-|H_i(z)|^2)|H_{j-2}(z)|^2,& if $j\ge 3$ and $j\ne i+2$.\qquad\endproof
}
$$
\end{lemma}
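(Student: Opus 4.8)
The plan is to compute the entries directly from the definitions, treating $F_0$ column by column. First I would write out the columns of $\Phi_{\tt H}(z)=[{\tt H}(z)\mid {\tt I}-{\tt H}(z){\tt H}^\ast(z)]$ explicitly. Indexing rows by $0,\dots,q-1$, the first column ($j=1$) has $a$-th entry $H_a(z)$, while for $j=2,\dots,q+1$ the $j$-th column of $\Phi_{\tt H}(z)$ is the $(j-2)$-th column of ${\tt I}-{\tt H}(z){\tt H}^\ast(z)$, whose $a$-th entry is $\delta_{a,j-2}-H_a(z)\overline{H_{j-2}(z)}$. The key reduction is that, by the definition of $F_0$, the $i$-th component of $F_0$ applied to a vector $x$ equals $|x_0|^2-|x_i|^2$; hence the $(i,j)$ entry of $F_0(\Phi_{\tt H}(z))$ depends only on the top (row $0$) entry and the row $i$ entry of the $j$-th column. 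The whole lemma therefore reduces to reading off these two entries in each case and subtracting their squared moduli.

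Second, I would carry out the case split on $j$ exactly as in the statement, with everything governed by whether the Kronecker delta $\delta_{a,j-2}$ contributes a $1$ in rows $a=0$ and $a=i$. For $j=1$ the two relevant entries are $H_0(z)$ and $H_i(z)$, giving $|H_0(z)|^2-|H_i(z)|^2$ at once. When $j=2$ (so $j-2=0$) the delta contributes in the top row, yielding top entry $1-|H_0(z)|^2$ and row $i$ entry $-H_i(z)\overline{H_0(z)}$ (using $i\ge 1$). When $j=i+2$ (so $j-2=i$) the delta contributes in row $i$ instead, giving top entry $-H_0(z)\overline{H_i(z)}$ and row $i$ entry $1-|H_i(z)|^2$. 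In the remaining range $j\ge 3$ with $j\ne i+2$, we have $j-2\ge 1$ and $j-2\ne i$, so the delta contributes in neither row; both entries are the pure products $-H_0(z)\overline{H_{j-2}(z)}$ and $-H_i(z)\overline{H_{j-2}(z)}$, and the common factor $|H_{j-2}(z)|^2$ can be pulled out. Taking squared moduli in each case produces precisely the four listed expressions.

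The computation is routine once the columns are written out; the only real care is bookkeeping at the boundary between the two index conventions — the $0$-indexing of the entries of ${\tt H}(z)$ against the $1$-indexing of $i$ and $j$ — and keeping the distinguished columns $j=2$ and $j=i+2$ (where a diagonal $1$ enters) disjoint from the generic case. In particular one must verify that $i\ge 1$ forces $\delta_{i,0}=0$ in the $j=2$ case, and that $\delta_{0,j-2}=0$ whenever $j\ge 3$, so that no spurious constant survives outside those two columns. I do not anticipate any genuine obstacle: the lemma is a direct unwinding of the definitions, and its purpose is to package these entries so that the homogeneous system $F(\Phi_{\tt H}(z))\,[b_0(z),\dots,b_q(z)]^T=0$ becomes fully explicit in the proof of Theorem~\ref{thm:N(z)}.
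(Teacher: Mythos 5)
Your proposal is correct and follows essentially the same route as the paper: a direct unwinding of the definitions, computing each $(i,j)$ entry of $F_0(\Phi_{\tt H}(z))$ from the row-$0$ and row-$i$ entries of the $j$-th column of $\Phi_{\tt H}(z)$ and splitting into the cases $j=1$, $j=2$, $j=i+2$, and the generic $j\ge 3$, $j\ne i+2$. The only difference is cosmetic — your Kronecker-delta bookkeeping $\delta_{a,j-2}-H_a(z)\overline{H_{j-2}(z)}$ compresses the paper's explicit column-by-column display into one formula — and all four resulting expressions agree with the lemma.
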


\begin{proof}
The LP$^2$ matrix $\Phi_{\tt H}(z)\in \mathcal{M}_{q\times (q+1)}(z)$ can be written explicitly as
$$
\left[\begin{array}{ccccc} 
H_0(z) & 1-|H_0(z)|^2 & -H_0(z)\overline{H_1(z)} & \cdots & -H_0(z)\overline{H_{q-1}(z)}\\ 
H_1(z) &  -H_1(z)\overline{H_0(z)} & 1-|H_1(z)|^2 & \cdots &-H_1(z)\overline{H_{q-1}(z)} \\ 
\vdots &   &  & \ddots & \\
H_{q-1}(z) & -H_{q-1}(z)\overline{H_0(z)} & -H_{q-1}(z)\overline{H_1(z)} & \cdots & 1-|H_{q-1}(z)|^2\\
\end{array}\right].
$$
By definition, the first column of $F_0(\Phi_{\tt H}(z))$ is    
$$F_0\left(\left[\begin{array}{c} 
H_0(z) \\
H_1(z) \\
\vdots \\
H_{q-1}(z) 
\end{array}\right]\right)=\left[\begin{array}{c} 
|H_0(z)|^2- |H_1(z)|^2\\
|H_0(z)|^2- |H_2(z)|^2\\
\vdots \\
|H_0(z)|^2- |H_{q-1}(z)|^2\\
\end{array}\right],$$
and its second column is 
$$F_0\left(\left[\begin{array}{c} 
1-|H_0(z)|^2 \\ 
-H_1(z)\overline{H_0(z)} \\ 
\vdots \\
-H_{q-1}(z)\overline{H_0(z)} \\
\end{array}\right]\right)=\left[\begin{array}{c} 
(1-|H_0(z)|^2)^2-|H_1(z)|^2|H_0(z)|^2\\
(1-|H_0(z)|^2)^2-|H_2(z)|^2|H_0(z)|^2\\
\vdots \\
(1-|H_0(z)|^2)^2-|H_{q-1}(z)|^2|H_0(z)|^2\\
\end{array}\right].$$
Thus, for $i=1,\dots,q-1$, the $(i,1)$ entry of $F_0(\Phi_{\tt H}(z))$ is $|H_0(z)|^2-|H_i(z)|^2$, and the $(i,2)$ entry of $F_0(\Phi_{\tt H}(z))$ is $(1-|H_0(z)|^2)^2-|H_i(z)|^2|H_0(z)|^2$, as desired.

The third through the last column of $F_0(\Phi_{\tt H}(z))$ behave essentially the same, and the third column is given as 
$$F_0\left(\left[\begin{array}{c} 
-H_0(z)\overline{H_1(z)}\\ 
1-|H_1(z)|^2\\ 
\vdots \\
 -H_{q-1}(z)\overline{H_1(z)} \\
\end{array}\right]\right)=\left[\begin{array}{c} 
|H_0(z)|^2|H_1(z)|^2-(1-|H_1(z)|^2)^2\\
|H_0(z)|^2|H_1(z)|^2-|H_2(z)|^2|H_1(z)|^2\\
\vdots \\
|H_0(z)|^2|H_1(z)|^2-|H_{q-1}(z)|^2|H_1(z)|^2\\
\end{array}\right],$$
and the last column is given as
$$F_0\left(\left[\begin{array}{c} 
-H_0(z)\overline{H_{q-1}(z)}\\ 
-H_1(z)\overline{H_{q-1}(z)} \\
\vdots \\
1-|H_{q-1}(z)|^2\\ 
\end{array}\right]\right)=\left[\begin{array}{c} 
|H_0(z)|^2|H_{q-1}(z)|^2-|H_1(z)|^2|H_{q-1}(z)|^2\\
|H_0(z)|^2|H_{q-1}(z)|^2-|H_2(z)|^2|H_{q-1}(z)|^2\\
\vdots \\
|H_0(z)|^2|H_{q-1}(z)|^2-(1-|H_{q-1}(z)|^2)^2\\
\end{array}\right].$$
Hence, for $i=1,\dots,q-1$ and $j=3,\dots,q+1$, the $(i,j)$ entry of $F_0(\Phi_{\tt H}(z))$ is
$$\cases{  |H_0(z)|^2|H_i(z)|^2-(1-|H_i(z)|^2)^2,& if $j=i+2$,\cr
               (|H_0(z)|^2-|H_i(z)|^2)|H_{j-2}(z)|^2,& if $j\ne i+2$,\cr
            }
$$            
as desired.
\end{proof}

\begin{lemma}
\label{lemma:F_k}
Let $k=1,\dots,q-1$ be fixed. For $i=1,\dots,q-k$ and $j=1,\dots,q+1$, the $(i,j)$ entry of $F_k(\Phi_{\tt H}(z))\in \mathcal{M}_{q-k,q+1}(z)$ is 
$$
\cases{H_{k-1}(z)\overline{H_{i+k-1}(z)},& if $j=1$,\cr
-(1-|H_{j-2}(z)|^2)H_{k-1}(z)\overline{H_{i+k-1}(z)},& if $j=k+1$ or $j=i+k+1$, \cr
H_{k-1}(z)\overline{H_{i+k-1}(z)}|H_{j-2}(z)|^2,& if $j\ge 2$, $j\ne k+1$ and $j\ne i+k+1$.\qquad\endproof
}
$$
\end{lemma}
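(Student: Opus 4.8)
The plan is to mirror the proof of Lemma~\ref{lemma:F_0} exactly, using the explicit form of $\Phi_{\tt H}(z)$ already displayed there together with the definition of the blocks $F_k$. First I would recall that, for a fixed $k$ with $1\le k\le q-1$, the vector $F_k(x)$ records the products $x_{k-1}\overline{x_{k+\ell}}$ as $\ell$ ranges over $0,\dots,q-1-k$; applied to the $j$-th column of $\Phi_{\tt H}(z)$, the entry $x_{k-1}$ is the $k$-th component of that column and $\overline{x_{i+k-1}}$ is the conjugate of its $(i+k)$-th component.

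The computation then splits into three cases according to which column of $\Phi_{\tt H}(z)$ we feed into $F_k$. For the first column, $j=1$, the components are simply $H_0(z),\dots,H_{q-1}(z)$, so the $(i,1)$ entry is $H_{k-1}(z)\overline{H_{i+k-1}(z)}$, matching the claim. For a general column $j\ge 2$, the $m$-th component of the $j$-th column of $\Phi_{\tt H}(z)$ equals $\delta_{m,\,j-1}-H_{m-1}(z)\overline{H_{j-2}(z)}$ (reading off the displayed matrix: the off-diagonal entries are $-H_{m-1}\overline{H_{j-2}}$ and the diagonal correction $1-|H_{j-2}|^2$ arises when $m=j-1$). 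The product $x_{k-1}\overline{x_{i+k-1}}$ therefore becomes
$$
\bigl(\delta_{k,\,j-1}-H_{k-1}(z)\overline{H_{j-2}(z)}\bigr)\,\overline{\bigl(\delta_{i+k,\,j-1}-H_{i+k-1}(z)\overline{H_{j-2}(z)}\bigr)}.
$$
When neither Kronecker delta fires, i.e. $j\ne k+1$ and $j\ne i+k+1$, this collapses to $H_{k-1}(z)\overline{H_{i+k-1}(z)}|H_{j-2}(z)|^2$. When exactly one delta fires, say $j=k+1$ (the case $j=i+k+1$ being symmetric), one factor becomes $1-|H_{j-2}(z)|^2$ while the other contributes $-\overline{H_{i+k-1}(z)}H_{j-2}(z)$ times the appropriate sign, and after regrouping the $H_{j-2}$ factors this yields $-(1-|H_{j-2}(z)|^2)H_{k-1}(z)\overline{H_{i+k-1}(z)}$.

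The only subtlety to check carefully is that the two delta indices $k+1$ and $i+k+1$ are always distinct (since $i\ge 1$), so the case where both deltas fire simultaneously never occurs and the three listed cases are genuinely exhaustive and mutually exclusive for $j$ in the stated range; I would also verify that the index $i+k-1$ stays within $\{0,\dots,q-1\}$, which holds because $i\le q-k$. None of these steps presents a real obstacle—the work is entirely a bookkeeping of indices—so I expect the main risk to be purely clerical: keeping the shift conventions consistent between the $1$-based matrix indices $(i,j)$ and the $0$-based filter indices on $H$, and correctly tracking the conjugation when one of the two factors is a real diagonal correction term.
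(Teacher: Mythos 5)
Your proposal is correct and follows essentially the same route as the paper's proof: a direct entry-wise computation that treats the first column of $\Phi_{\tt H}(z)$ separately and then, for columns $j\ge 2$, multiplies the $k$-th component of the column by the conjugate of its $(i+k)$-th component, with the same three-way case split on $j=k+1$, $j=i+k+1$, or neither. Your Kronecker-delta bookkeeping is just a compact rewriting of the paper's row-by-row case analysis of ${\tt I}-{\tt H}(z){\tt H}^\ast(z)$, and your checks that the two deltas never fire simultaneously and that the indices stay in range are sound.
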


\begin{proof}
By definition, the first column of $F_k(\Phi_{\tt H}(z))$ is
$$F_k\left(\left[\begin{array}{c} 
H_0(z) \\
H_1(z) \\
\vdots \\
H_{q-1}(z) 
\end{array}\right]\right)=\left[\begin{array}{c} 
H_{k-1}(z)\overline{H_k(z)}\\
H_{k-1}(z)\overline{H_{k+1}(z)}\\
\vdots \\
H_{k-1}(z)\overline{H_{q-1}(z)}\\
\end{array}\right].$$
Hence, for $i=1,\cdots,q-k$, the $(i,1)$ entry of $F_k(\Phi_{\tt H}(z))$ is $H_{k-1}(z)\overline{H_{i+k-1}(z)}$, as desired.

For $i=1,\cdots,q-k$, the second to the last entry of the $i$-th row of $F_k(\Phi_{\tt H}(z))$ is the $i$-th row of $F_k({\tt I}-{\tt H}^\ast(z){\tt H}(z))$, which can be obtained by multiplying the $k$-th row of ${\tt I}-{\tt H}^\ast(z){\tt H}(z)$ and the complex conjugate of $(i+k)$-th row of ${\tt I}-{\tt H}^\ast(z){\tt H}(z)$ entry-wise. Since, for $j=2,\dots,q+1$, the $(j-1)$-th entry of the $k$-th row of ${\tt I}-{\tt H}^\ast(z){\tt H}(z)$ is 
$$\cases{  -H_{k-1}(z)\overline{H_{j-2}(z)},& if $j\ne k+1$,\cr
           1-|H_{j-2}(z)|^2,& if $j= k+1$,\cr
            }
$$           
and the $(j-1)$-th entry of the complex conjugate of $(i+k)$-th row of ${\tt I}-{\tt H}^\ast(z){\tt H}(z)$ is
$$\cases{  -\overline{H_{i+k-1}(z)}H_{j-2}(z),& if $j\ne i+k+1$,\cr
           1-|H_{j-2}(z)|^2,& if $j= i+k+1$,\cr
            }
$$           
the $(i,j-1)$ entry of $F_k({\tt I}-{\tt H}^\ast(z){\tt H}(z))$, or equivalently, the $(i,j)$ entry of $F_k(\Phi_{\tt H}(z))$, is
$$\cases{  H_{k-1}(z)\overline{H_{i+k-1}(z)}|H_{j-2}(z)|^2,& if $j\ne k+1$ and $j\ne i+k+1$,\cr
           -(1-|H_{j-2}(z)|^2)H_{k-1}(z)\overline{H_{i+k-1}(z)},& if $j= k+1$ or $j=i+k+1$,\cr
            }
$$           
as desired.
\end{proof}

Now we are ready to present our proof of Theorem~\ref{thm:N(z)}.

{\em Proof of Theorem~\ref{thm:N(z)}.}
From Lemma~\ref{lemma:F_0}, we see that, for $i=1,\dots,q-1$, the $i$-th row of $F_0(\Phi_{\tt H}(z))\in \mathcal{M}_{q-1,q+1}(z)$ has the following form:
\begin{enumerate}[(i)]
\item the first entry is $|H_0(z)|^2-|H_i(z)|^2$,
\item the second entry is $(1-|H_0(z)|^2)^2-|H_i(z)|^2|H_0(z)|^2$,
\item the $j$-th entry, for $j=3,\dots,q+1$, is 
$$\cases{  |H_0(z)|^2|H_i(z)|^2-(1-|H_i(z)|^2)^2,& if $j=i+2$,\cr
               (|H_0(z)|^2-|H_i(z)|^2)|H_{j-2}(z)|^2,& if $j\ne i+2$.\cr
            }
$$            
\end{enumerate}
Let $k=1,\dots,q-1$ be fixed. From Lemma~\ref{lemma:F_k}, we see that, for $i=1,\dots,q-k$, the $i$-th row of $F_k(\Phi_{\tt H}(z))\in \mathcal{M}_{q-k,q+1}(z)$ has the following form:
\begin{enumerate}[(i)]
\item the first entry is $H_{k-1}(z)\overline{H_{i+k-1}(z)}$,
\item the $j$-th entry, for $j=2,\dots,q+1$, is 
$$\cases{  -(1-|H_{j-2}(z)|^2)H_{k-1}(z)\overline{H_{i+k-1}(z)},& if $j=k+1$ or $j=i+k+1$,\cr
               H_{k-1}(z)\overline{H_{i+k-1}(z)}|H_{j-2}(z)|^2,& if $j\ne k+1$ and $j\ne i+k+1$.\cr
            }
$$            
\end{enumerate}
Thus, we have, for each $k= 0,1,\dots,q-1$,
$$F_k(\Phi_{\tt H}(z))\left[\begin{array}{c} 
2-{\tt H}^\ast(z){\tt H}(z)\\ 
1\\
\vdots\\
1
\end{array}\right]=0,$$
where the case $k=0$ is due to the observation from Lemma~\ref{lemma:F_0}, while the case $k\ne0$ follows from Lemma~\ref{lemma:F_k}.
Hence, the Laurent polynomials $b_0(z)=2-{\tt H}^\ast(z){\tt H}(z)$, $b_1(z)=\cdots=b_q(z)=1$ satisfy (\ref{eq:FPhi}). Noting that they satisfy the other condition (\ref{eq:inhomo}) as well, we conclude that $b_0(z)=2-{\tt H}^\ast(z){\tt H}(z)$, $b_1(z)=\cdots=b_q(z)=1$ satisfy (\ref{eq:KOPT}) as desired.
\qquad\endproof

Although Theorem~\ref{thm:N(z)} provides a sufficient condition for the diagonal matrix $B(z)$ to satisfy the identity in (\ref{eqn:paraunitary}), it is easy to see that it is not necessary. For example, consider the case where ${\tt H}(z)=[0,1]^T$, and 
$$\Phi_{\tt H}(z)=\left[\begin{array}{ccc} 
0&1&0\\ 
1&0&0
\end{array}\right].$$
We notice that if we take $B(z)={\rm diag\,}([1,1,0])$, then we get the desired property, $\Phi_{\tt H}(z)B(z)\Phi_{\tt H}^\ast(z)={\tt I}$, but this $B(z)$ is not of the form in Theorem~\ref{thm:N(z)}. We also notice that the LP$^2$ matrix $\Phi_{\tt H}(z)$ in this case is actually paraunitary, hence another possible choice for $B(z)$ is ${\tt I}={\rm diag\,}([1,1,1])$, which is of the form in Theorem~\ref{thm:N(z)}. In fact, any $B(z)$ of the form $B(z)={\rm diag\,}([1,1,c])$, $c\in\RR$, satisfies $\Phi_{\tt H}(z)B(z)\Phi_{\tt H}^\ast(z)={\tt I}$ for this example. 

The next result offers conditions on ${\tt H}(z)$ under which the diagonal matrix  $B(z)$ is unique, and hence has the form given in Theorem~\ref{thm:N(z)}. 

\begin{theorem}
\label{thm:Sq}
Let ${\tt H}(z)=[H_0(z),H_1(z),\dots,H_{q-1}(z)]^T\in\mathcal{M}_{q}(z)$, and let $\Phi_{\tt H}(z)$ be the associated LP$^2$ matrix. Suppose that $B(z)\in\mathcal{M}_{(q+1)\times (q+1)}(z)$ is diagonal satisfying $\Phi_{\tt H}(z)B(z)\Phi_{\tt H}^\ast(z)={\tt I}$. Then $B(z)={\rm diag }([2-{\tt H}^\ast(z){\tt H}(z), 1, \dots, 1])$ for $z\in\Td\bks S_{{\tt H}}$, where the set $S_{{\tt H}}\subset \Td$ is defined as 
$$S_{{\tt H}}:=\{z\in\Td: H_0(z)\overline{H_1(z)}=0 \hbox{\,\,or\,\,} 1-|H_0(z)|^2-|H_1(z)|^2=0 \}$$
if $q=2$, and as
$$S_{{\tt H}}:=\{z\in\Td: H_{k-1}(z)\overline{H_{i+k-1}(z)}= 0, \hbox{\,\,for some $k=1,\dots,q-1$, $i=1,\dots,q-k$}\}$$ 
if $q\ge 3$. 
\qquad\endproof
\end{theorem}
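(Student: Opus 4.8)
The plan is to fix a point $z\in\Td\bks S_{\tt H}$ and treat the diagonal entries $b_0(z),\dots,b_q(z)$ as unknowns in the linear system made up of the inhomogeneous equation (\ref{eq:inhomo}) together with the homogeneous equations (\ref{eq:FPhi}), whose coefficients I read off directly from Lemmas~\ref{lemma:F_0} and \ref{lemma:F_k}. Since Theorem~\ref{thm:N(z)} already exhibits $(2-{\tt H}^\ast{\tt H},1,\dots,1)$ as a solution, it is enough to show that this system has \emph{at most} one solution at each such $z$. Writing $h_j:=|H_j(z)|^2$, the defining property of $\Td\bks S_{\tt H}$ is precisely that every scalar prefactor $H_{k-1}(z)\overline{H_{i+k-1}(z)}$ occurring in Lemma~\ref{lemma:F_k} is nonzero, so each equation in the $F_k$-block ($k\ge1$) may be divided through by its prefactor.

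For $q\ge3$ I would argue directly on a putative solution $(b_0,\dots,b_q)$. Dividing the $i$-th equation of the $F_k$-block ($k\ge1$) by $H_{k-1}(z)\overline{H_{i+k-1}(z)}$ and simplifying, the relation collapses to $b_k+b_{i+k}=T$, where $T:=b_0+\sum_{\ell=1}^{q}h_{\ell-1}b_\ell$ does not depend on $(k,i)$. As $(k,i+k)$ ranges over all pairs $1\le a<b\le q$, these relations force $b_1=\cdots=b_q=:c$ and $T=2c$; substituting back yields $b_0=c(2-{\tt H}^\ast{\tt H})$. Feeding this one-parameter candidate into (\ref{eq:inhomo}) and using the same cancellation that verifies Theorem~\ref{thm:N(z)} collapses the left-hand side to $c$, so $c=1$. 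Hence $(b_0,\dots,b_q)=(2-{\tt H}^\ast{\tt H},1,\dots,1)$, and the $F_0$-block is automatically consistent because this candidate is a genuine solution.

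For $q=2$ the sums $b_a+b_b=T$ reduce to a single relation, so the collapse above no longer forces the $b_\ell$ to coincide and a finer computation is needed. Here I would pass to the difference $w=(w_0,w_1,w_2)$ of two solutions and examine the $3\times3$ homogenized system built from (\ref{eq:inhomo}), the single $F_0$-row, and the single $F_1$-row. Using the $F_1$-row (divided by $H_0(z)\overline{H_1(z)}\ne0$) to eliminate $w_0=(1-h_0)w_1+(1-h_1)w_2$ and substituting into the other two rows, both remaining equations acquire the common factor $1-h_0-h_1$: the reduced system is $(1-h_0)w_1+h_0w_2=0$ and $(1-h_0-h_1)(w_1-w_2)=0$. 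This is exactly where the second clause in the definition of $S_{\tt H}$ enters: off $S_{\tt H}$ one has $1-h_0-h_1\ne0$, forcing $w_1=w_2$ and then $w_1=w_2=0$, whence $w_0=0$.

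The main obstacle is the bookkeeping in these two algebraic collapses --- checking that the generic $F_k$-equation really reduces to $b_k+b_{i+k}=T$ for $q\ge3$, and that the factor $1-h_0-h_1$ genuinely emerges in \emph{both} reduced rows for $q=2$. These cancellations are what dictate the precise shape of $S_{\tt H}$, and the likeliest source of error is losing track of the index shift between the column index $j$ of $F_k(\Phi_{\tt H})$ and the diagonal index $\ell=j-1$ of $b_\ell$.
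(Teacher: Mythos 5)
Your proof is correct, and for $q\ge 3$ it takes a genuinely more elementary route than the paper's. Both arguments share the same skeleton --- read the linear system off Lemmas~\ref{lemma:F_0} and \ref{lemma:F_k}, split into the cases $q=2$ and $q\ge 3$, use the nonvanishing prefactors off $S_{\tt H}$ to divide, and let (\ref{eq:inhomo}) fix the normalization --- but for $q\ge 3$ the paper goes through Lemma~\ref{lemma:split}: it factors $\widetilde{F}(\Phi_{\tt H}(z))=D(z)A(z)U(z)$, shows the combinatorial $0$--$1$ matrix $A$ has rank $q$, and then argues by rank--nullity that $\ker F(\Phi_{\tt H}(z))$ is one-dimensional and spanned by the solution from Theorem~\ref{thm:N(z)}. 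You bypass the factorization and the rank computation entirely: dividing the $(k,i)$ equation of the $F_k$-block by $H_{k-1}(z)\overline{H_{i+k-1}(z)}$ indeed yields $b_0+\sum_{\ell=1}^{q}h_{\ell-1}b_\ell-b_k-b_{i+k}=0$, i.e.\ $b_k+b_{i+k}=T$, and since the pairs $(k,i+k)$ exhaust all $1\le a<b\le q$, for $q\ge 3$ this forces $b_1=\cdots=b_q=c$ and $b_0=c\left(2-{\tt H}^\ast(z){\tt H}(z)\right)$ --- which is precisely the one-dimensionality of the kernel, obtained by hand. Your elimination also makes transparent why the dichotomy sits at $q=3$: three or more pairwise-sum relations force the $b_\ell$ to coincide, a single one does not, which is why $q=2$ needs the $F_0$-row and the extra condition $1-|H_0|^2-|H_1|^2\ne 0$. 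For $q=2$ your argument and the paper's are essentially the same elimination (the paper row-reduces $F(\Phi_{\tt H}(z))$ explicitly; you difference two solutions), and your reduced system is correct; the only slip is verbal, where you say ``both remaining equations acquire the common factor $1-h_0-h_1$'' --- in fact only the $F_0$-row does, while the homogenized (\ref{eq:inhomo}) row reduces to $(1-h_0)w_1+h_0w_2=0$ with no such factor, exactly as in your displayed system, so the conclusion is unaffected. What each approach buys: the paper's Lemma~\ref{lemma:split} records reusable structural information about $F(\Phi_{\tt H}(z))$ as a standalone result, whereas your argument is shorter, self-contained, and needs no linear-algebraic machinery beyond substitution.
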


The proof of Theorem~\ref{thm:Sq} is based on the following lemma, which computes essentially the rank of the matrix $F(\Phi_{\tt H}(z))$ using  the underlying structure of $\Phi_{\tt H}(z)$.   

\begin{lemma}
\label{lemma:split}
Let ${\tt H}(z)=[H_0(z),H_1(z),\dots,H_{q-1}(z)]^T\in\mathcal{M}_{q}(z)$, and let $\Phi_{\tt H}(z)$ be the associated LP$^2$ matrix. Let
$$\widetilde{F}(\Phi_{\tt H}(z)):=\left[\begin{array}{c} 
F_1(\Phi_{\tt H}(z))\\
\vdots\\
F_{q-1}(\Phi_{\tt H}(z))
\end{array}\right]\in \mathcal{M}_{(q-1)q/2,q+1}(z).$$
Then, $\widetilde{F}(\Phi_{\tt H}(z))$ can be factored into $D(z) A(z) U(z)$ where 
\begin{enumerate}[(i)]
\item $D(z)=D_1(z)\oplus\dots\oplus D_{q-1}(z)\in \mathcal{M}_{(q-1)q/2,(q-1)q/2}(z)$ is a diagonal matrix such that $D_k(z)={\rm diag }([H_{k-1}(z)\overline{H_k(z)},\dots,H_{k-1}(z)\overline{H_{q-1}(z)}])\in\mathcal{M}_{q-k,q-k}(z)$, for $k=1,\dots,q-1$.
\item $A(z):=A\in \mathcal{M}_{(q-1)q/2,q+1}(z)$ is a scalar matrix (i.e. independent of the variable $z$) of the form
$$A=\left[\begin{array}{c} 
A_1\\
\vdots\\
A_{q-1}
\end{array}\right]$$
with $A_k\in \mathcal{M}_{q-k,q+1}(z)$, $k=1,\dots,q-1$, being the scalar matrices whose $(i,j)$ entry, for $i=1,\dots,q-k$ and $j=1,\dots,q+1$, is 
$$
\cases{1,& if $j=1$ or $j=k+1$ or $j=i+k+1$, \cr
0,& if $j\ge 2$ and $j\ne k+1$ and $j\ne i+k+1$.
}
$$
\item
$U(z)\in \mathcal{M}_{q+1,q+1}(z)$ is an upper triangular matrix of the form 
$$
\left[\begin{array}{ccccc} 
1&|H_0(z)|^2&|H_1(z)|^2&\dots &|H_{q-1}(z)|^2\\
0&-1&0&0&0\\
0&0&-1&0&0\\
0&0&0&\ddots&0\\
0&0&0&0&-1
\end{array}\right]
$$
\end{enumerate}
Furthermore, for $z\in\Td\bks S_{{\tt H}}$, the rank of $\widetilde{F}(\Phi_{\tt H}(z))$  is $1$ if $q=2$, and $q$ if $q\ge 3$.
\qquad\endproof
\end{lemma}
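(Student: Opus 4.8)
The plan is to prove Lemma~\ref{lemma:split} by first establishing the factorization $\widetilde{F}(\Phi_{\tt H}(z)) = D(z)A(z)U(z)$ through direct verification against the explicit entry formulas already computed in Lemma~\ref{lemma:F_k}, and then to read off the rank from the factored form. For the factorization, I would fix $k \in \{1,\dots,q-1\}$ and work block-by-block, showing that the $k$-th block $F_k(\Phi_{\tt H}(z))$ equals $D_k(z) A_k U(z)$. The matrix $U(z)$ is shared across all blocks, so the verification amounts to checking, for each fixed row index $i$, that multiplying the scalar row $A_k$ (the $i$-th row) by $U(z)$ and then scaling by the diagonal entry $H_{k-1}(z)\overline{H_{i+k-1}(z)}$ of $D_k(z)$ reproduces exactly the three-case formula from Lemma~\ref{lemma:F_k}. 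Concretely, the $i$-th row of $A_k$ has ones in columns $1$, $k+1$, and $i+k+1$ and zeros elsewhere; applying $U(z)$, the column-$1$ contribution picks up the top-left $1$ of $U(z)$, while the ones in columns $k+1$ and $i+k+1$ interact with both the diagonal $-1$ entries and the $|H_{j-2}(z)|^2$ entries in the first row of $U(z)$. Tracking these contributions should yield precisely the factors $1$, $-(1-|H_{j-2}(z)|^2)$, and $|H_{j-2}(z)|^2$ appearing in Lemma~\ref{lemma:F_k}, after the diagonal scaling.

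Once the factorization is in hand, the rank computation proceeds by analyzing each factor. For $z \in \Td \bks S_{{\tt H}}$, by the definition of $S_{{\tt H}}$ (in the $q\ge 3$ case) every diagonal entry $H_{k-1}(z)\overline{H_{i+k-1}(z)}$ of $D(z)$ is nonzero, so $D(z)$ is invertible; likewise $U(z)$ is upper triangular with nonzero diagonal $(1,-1,\dots,-1)$ and hence invertible. Therefore the rank of $\widetilde{F}(\Phi_{\tt H}(z))$ equals the rank of the scalar matrix $A$. The crux of the rank claim is thus a purely combinatorial fact about the $0$-$1$ matrix $A$: I would show that its rows span a space of dimension $q$ when $q \ge 3$ and dimension $1$ when $q=2$. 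For $q=2$ the only block is $A_1$, which is a single row $[1,0,1]$ (up to the indexing), giving rank $1$; here the set $S_{{\tt H}}$ is correspondingly defined by the two separate vanishing conditions, and I would need to account for the invertibility of $D$ and $U$ more carefully since $U$'s relevant structure is smaller.

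The main obstacle I anticipate is the rank-$q$ computation of the scalar matrix $A$ for $q \ge 3$. The rows of $A$ are indexed by pairs $(k,i)$ with $1\le k \le q-1$ and $1 \le i \le q-k$, and each row is the indicator vector of the column set $\{1, k+1, i+k+1\}$; equivalently, reindexing by $a = k$ and $b = i+k$ with $1 \le a < b \le q-1$ one sees these are indicator vectors of $\{1\} \cup \{a+1\} \cup \{b+1\}$. I would argue that the column space (or row space) has dimension exactly $q$ by exhibiting an explicit set of $q$ independent rows and then showing every other row lies in their span, or alternatively by computing the rank of the associated incidence structure directly. The delicate point is that although there are $(q-1)q/2$ rows, they satisfy many linear dependencies, and pinning down that the rank is exactly $q$ (not $q+1$ and not smaller) requires identifying both the spanning relations among the column indicators and the single global dependency that prevents full rank $q+1$. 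I expect the column corresponding to index $1$ (present in every row) together with a triangular pattern in the remaining columns to furnish the independent set, and the absence of any row distinguishing certain column pairs to cap the rank at $q$.
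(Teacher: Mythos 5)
Your factorization step and the reduction of the rank question to the scalar matrix $A$ (via invertibility of $D(z)$ for $z\in\Td\bks S_{\tt H}$ and of the upper triangular $U(z)$ with diagonal $(1,-1,\dots,-1)$) follow the paper's proof essentially verbatim, and that part of the plan is sound. The genuine gap is that you never actually prove the crux, namely that ${\rm rank}\,A=q$ for $q\ge 3$: you only announce that you ``would argue'' it by exhibiting $q$ independent rows and checking spanning relations, and the one concrete mechanism you offer for capping the rank at $q$ --- ``the absence of any row distinguishing certain column pairs'' --- is not the right reason (no two columns of $A$ coincide when $q\ge 3$). The correct cap comes from the single column dependency: every row of $A$ has exactly two $1$'s among columns $2,\dots,q+1$, so the sum of those $q$ columns equals twice the first column; hence deleting the first column does not change the rank, leaving the $\frac{q(q-1)}{2}\times q$ matrix $\widetilde{A}=[v_1,v_2,\dots,v_q]$. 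The paper then finishes concretely: the first $q-1$ rows of $[v_2,\dots,v_q]$ (the rows coming from the block $A_1$) form ${\tt I}_{q-1}$, so $v_2,\dots,v_q$ are independent; and if $v_1=c_2v_2+\dots+c_qv_q$, matching the first $q-1$ rows forces $c_2=\dots=c_q=1$, which contradicts row $q$ (the row of $A_2$ indexed by $(k,i)=(2,1)$), where $v_2+\dots+v_q$ has entry $2$ while $v_1$ has entry $0$. Without this (or an equivalent) explicit argument, your proposal does not establish the rank claim, which is the only nontrivial content of the lemma beyond bookkeeping.

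Two smaller slips are worth correcting. For $q=2$ the unique row of $A$ is the indicator of $\{1,k+1,i+k+1\}=\{1,2,3\}$, i.e. $[1,1,1]$, not $[1,0,1]$ (the rank-$1$ conclusion survives); also no extra care with $U$ is needed there, since $U(z)$ is always invertible, and the second condition defining $S_{\tt H}$ when $q=2$ (namely $1-|H_0(z)|^2-|H_1(z)|^2\ne 0$) plays no role in this lemma --- it is only used later in the proof of Theorem~\ref{thm:Sq}, where the row $F_0(\Phi_{\tt H}(z))$ enters. Finally, your reindexing of the rows should read $1\le a<b\le q$ (since $b=i+k$ can equal $q$), not $1\le a<b\le q-1$.
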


\begin{proof}
Let $k=1,\dots,q-1$ be fixed. From Lemma~\ref{lemma:F_k}, we see that, for $i=1,\dots,q-k$, the $i$-th row of $F_k(\Phi_{\tt H}(z))\in \mathcal{M}_{q-k,q+1}(z)$ has the following form:
\begin{enumerate}[(i)]
\item the first entry is $H_{k-1}(z)\overline{H_{i+k-1}(z)}$,
\item the $j$-th entry, for $j=2,\dots,q+1$, is 
$$\cases{  -(1-|H_{j-2}(z)|^2)H_{k-1}(z)\overline{H_{i+k-1}(z)},& if $j=k+1$ or $j=i+k+1$,\cr
               H_{k-1}(z)\overline{H_{i+k-1}(z)}|H_{j-2}(z)|^2,& if $j\ne k+1$ and $j\ne i+k+1$.\cr
            }
$$            
\end{enumerate}
Thus, the $i$-th row of $F_k(\Phi_{\tt H}(z))$ has a common factor $H_{k-1}(z)\overline{H_{i+k-1}(z)}$, and after factoring this term out, the first entry of the remaining row vector is 1, and the $j$-th entry, for $j=2,\dots,q+1$, is
$$\cases{  |H_{j-2}(z)|^2-1,& if $j= k+1$ or $j=i+k+1$,\cr
|H_{j-2}(z)|^2,& if $j\ne k+1$ and $j\ne i+k+1$.\cr       
            }
$$           
Let $R_k(z)\in\mathcal{M}_{q-k,q+1}(z)$ be the matrix whose $i$-th row is this row vector. Thus 
the above argument gives the factorization $F_k(\Phi_{\tt H}(z))=D_k(z)R_k(z)$. Since this holds true for any fixed $k=1,\dots,q-1$, we see that $\widetilde{F}(\Phi_{\tt H}(z))=D(z)R(z)$ where
$$R(z):=\left[\begin{array}{c} 
R_1(z)\\
\vdots\\
R_{q-1}(z)
\end{array}\right]\in \mathcal{M}_{(q-1)q/2,q+1}(z).$$
Since it is easy to see $R(z)=A(z)U(z)$, we get
$\widetilde{F}(\Phi_{\tt H}(z))=D(z)A(z)U(z)$ as desired.

It is clear that, when $z\in\Td\bks S_{{\tt H}}$, the rank of $\widetilde{F}(\Phi_{\tt H}(z))$ is equal to the rank of  $A(z)=A$, which we now compute. Since $A=[1, 1, 1]^{T}$ for $q=2$, its rank is $1$. We next show that $A$ has rank $q$, for every $q\ge 3$. We note that the first column of $A$ is the vector of all $1$'s, and the sum of the other columns of $A$ is twice its first column, i.e. the vector of all $2$'s. Let $\widetilde{A}$ be the ${q(q-1)\over 2}\times q$ submatrix of $A$ obtained by removing the first column of $A$. Then ${\rm rank}\, A= {\rm rank}\,\widetilde{A}$, hence it suffices to show that ${\rm rank}\,\widetilde{A}=q$.

Since ${q(q-1)\over 2}\ge q$ for $q\ge 3$, we have ${\rm rank}\, \widetilde{A}\le q$. For the other direction, let $v_1,\dots,v_q$ be the columns of $\widetilde{A}$. Then it is easy to see that the first $q-1$ rows of $[v_2, \dots, v_q]$ is ${\tt I}_{q-1}$, the $(q-1)\times (q-1)$ identity matrix. Hence, ${\rm rank}\, \widetilde{A}\ge {\rm rank}\, {\tt I}_{q-1}=q-1$, and $v_2, \dots, v_q$ are linearly independent. Now, in order to show that ${\rm rank}\, \widetilde{A}=q$, it suffices to show that $v_1$ cannot be written as a linear combination of $v_2, \dots, v_q$. Suppose not, i.e. suppose that there exist scalar values $c_2, \dots, c_q$ such that $v_1=c_2v_2+\dots+c_qv_q$.  
Then, since $v_1$'s first $q-1$ entries are all one, using the observation that the first $q-1$ rows of the matrix $[v_2, \dots, v_q]$ is ${\tt I}_{q-1}$ again, we see that $c_2=\dots=c_q=1$. But this leads to the contradiction, since the $q$-th entry of $c_2v_2+\dots+c_qv_q=v_2+\dots+v_q$ is two, whereas the $q$-th entry of $v_1$ is zero. Therefore, we conclude that $v_1$ cannot be spanned by the vectors $v_2,\dots,v_q$, which in turn implies that ${\rm rank}\, \widetilde{A}=q$, as desired.
\end{proof}

We now present our proof of Theorem~\ref{thm:Sq}.

{\em Proof of Theorem~\ref{thm:Sq}.}

(Case I: When $q=2$). Suppose that $\Phi_{\tt H}(z){\rm diag}[b_0(z),b_1(z),b_2(z)]\Phi_{\tt H}^\ast(z)={\tt I}$, and that $z\in\Td\bks S_{\tt H}$, i.e. $z$ satisfies $H_0(z)\overline{H_1(z)}\ne 0$ and $1-|H_0(z)|^2-|H_1(z)|^2\ne 0$. Then, $b_0(z), b_1(z), b_2(z)$ satisfy (\ref{eq:inhomo}) and (\ref{eq:FPhi}) with $q=2$. Since $F(\Phi_{\tt H}(z))$, in this case, is
$$
\left[\begin{array}{ccc} 
|H_0|^2-|H_1|^2 & (1-|H_0|^2)^2-|H_1|^2|H_0|^2 & |H_0|^2|H_1|^2-(1-|H_1|^2)^2\\ 
H_0\overline{H_1} & H_0\overline{H_1}(|H_0|^2-1) & H_0\overline{H_1}(|H_1|^2-1)
\end{array}\right], 
$$
using the elementary row operations, $F(\Phi_{\tt H}(z))$ can be decomposed into
\begin{eqnarray*}
&&\left[\begin{array}{cc} 
0&1\\
1&0\end{array}\right]
\left[\begin{array}{cc} 
H_0\overline{H_1}&0\\
0&1\end{array}\right]
\left[\begin{array}{cc} 
1&0\\
|H_0|^2-|H_1|^2&1\end{array}\right]\\
&{}&\quad\quad\left[\begin{array}{cc} 
1&0\\
0&1-|H_0|^2-|H_1|^2\end{array}\right]
\left[\begin{array}{cc} 
1&-(1-|H_0|^2)\\
0&1\end{array}\right]
\left[\begin{array}{ccc} 
1&0&-(2-|H_0|^2-|H_1|^2)\\
0&1&-1\end{array}\right],
\end{eqnarray*}
where the variable $z$ is suppressed for a moment for simplicity.
Since $z\in\Td\bks S_{\tt H}$, the matrix $F(\Phi_{\tt H}(z))$ is row equivalent to the matrix
$$\left[\begin{array}{ccc} 
1&0&-(2-|H_0(z)|^2-|H_1(z)|^2)\\
0&1&-1\end{array}\right],$$
and from (\ref{eq:FPhi}) (for $q:=2$), we obtain
$$b_0(z)=(2-|H_0(z)|^2-|H_1(z)|^2)b_2(z)=(2-{\tt H}^\ast(z){\tt H}(z))b_2(z),\quad b_1(z)=b_2(z).$$
In order for $b_0(z), b_1(z), b_2(z)$ to satisfy (\ref{eq:inhomo}) (for $q:=2$) as well, $b_2(z)$ has to be $1$, hence we get
$$b_0(z)=2-{\tt H}^\ast(z){\tt H}(z),\quad b_1(z)=b_2(z)=1,\quad \hbox{ for } z\in\Td\bks S_{\tt H}$$
as desired for the case $q=2$.

(Case II: When $q\ge 3$). Suppose that $\Phi_{\tt H}(z){\rm diag}[b_0(z),b_1(z),\dots,b_q(z)]\Phi_{\tt H}^\ast(z)={\tt I}$ and that $z\in\Td\bks S_{\tt H}$, i.e. $z$ satisfies $H_{k-1}(z)\overline{H_{i+k-1}(z)}\ne 0$, for all $k=1,\dots,q-1$ and $i=1,\dots,q-k$. Then we have ${\rm rank\,}F(\Phi_{\tt H}(z))\ge {\rm rank\,} \widetilde{F}(\Phi_{\tt H}(z))=q$ from Lemma~\ref{lemma:split}.

Since we know from Theorem~\ref{thm:N(z)} that 
\begin{equation}
\label{eq:solntoFb}
b_0(z)=2-{\tt H}^\ast(z){\tt H}(z),\quad b_1(z)=\cdots=b_q(z)=1
\end{equation}
is a solution to $F(\Phi_{\tt H}(z))[b_0(z),b_1(z),\dots,b_q(z)]^T=0$, we see that ${\rm dim (ker\,}F(\Phi_{\tt H}(z)))\ge 1$. Thus, we have $q\le {\rm rank\,}F(\Phi_{\tt H}(z))=q+1-{\rm dim (ker\,}F(\Phi_{\tt H}(z)))\le q$, hence ${\rm rank\,}F(\Phi_{\tt H}(z))=q$. This means that, for each fixed $z\in\Td\bks S_{\tt H}$, there is no other solution to $F(\Phi_{\tt H}(z))[b_0(z),b_1(z),\dots,b_q(z)]^T=0$ than the constant multiples of the one in (\ref{eq:solntoFb}). In order for $b_0(z), b_1(z), \dots, b_q(z)$ to satisfy (\ref{eq:inhomo}) as well, the constant multiple has to be $1$, hence the only solution to $\Phi_{\tt H}(z){\rm diag}[b_0(z),b_1(z),\dots,b_q(z)]\Phi_{\tt H}^\ast(z)={\tt I}$ is the one given in (\ref{eq:solntoFb}) for $z\in\Td\bks S_{\tt H}$, as desired.
\qquad\endproof

Note that the set $S_{{\tt H}}\subset \Td$ could be the empty set or the entire set $\Td$. For example,  if ${\tt H}(z)=[0,1]^T$ then $S_{{\tt H}}=\TT$ and, as such, $B(z)$ does not have to take the form as in Theorem~\ref{thm:N(z)}. On the other hand, if ${\tt H}(z)=[0.5,0.5]^T$ then $S_{{\tt H}}=\emptyset$ and so $B(z)$ is uniquely determined as the one given in Theorem~\ref{thm:N(z)}.

\section{New Construction of Tight Wavelet Frames}
\label{S:wavelet}

We now use the results of the previous section  to provide a new method for constructing tight wavelet filter banks for any spatial dimension and for any dilation (or sampling) matrix. Even though many methods for constructing wavelets have been developed, methods for constructing wavelets in this generality have been scarce at best. But first,  we present a brief review on wavelets, wavelet filter banks and their polyphase representations. More details can be found, for example, in \cite{V,DV2,Hur}. 

Let $\dil$ be an $\dm\times\dm$ integer sampling or dilation matrix, and let $q:=|\det \dil|\ge 2$. We use $\capset$ (resp. $\dualset$) to denote a complete set of representatives of the distinct cosets of the quotient group $\Zd/\dil\Zd$ (resp. $2\pi(((\dil^\ast)^{-1}\Zd)/\Zd)$) containing $0$. Then the cardinality of $\capset$ (resp. $\dualset$) is $q$. We denote the elements of $\capset$ by $\nu_0:=0,\nu_1,\dots,\nu_{q-1}$. 

In the sequel we consider only FIR filters. A filter $h:\Zd\to\RR$ is called {\it lowpass} if 
$
\sum_{k\in\Zd}h(k)=\sqrt{q},
$
and {\it highpass} if
$
\sum_{k\in\Zd}h(k)=0.
$
The $z$-transform of a filter $h$ is defined as $H(z):=\sum_{k\in\Zd}h(k)z^{-k}$.
A Laurent polynomial column vector ${\tt H}(z)\in \mathcal{M}_q(z)$ is called the (synthesis) {\it polyphase representation} of a filter $h$ if 
$${\tt H}(z)=[H_{\nu_0}(z),H_{\nu_1}(z),\dots,H_{\nu_{q-1}}(z)]^T,$$ 
where $H_\nu(z)$ is the $z$-transform of the filter $h_\nu$ defined as $h_\nu(k)=h(\dil k+\nu)$, $k\in\Zd$.
Then we have $H(z)=\sum_{\nu\in\capset}z^{-\nu} H_{\nu}(z^{\dil}).$
A Laurent polynomial row vector can be associated with the analysis polyphase representation of a filter in a similar fashion. 
Since $\sum_{\nu\in\capset}H_{\nu}({\tt 1})=H({\tt 1})=\sum_{k\in\Zd}h(k)$, where ${\tt 1}=[1,\dots,1]^T\in \RR^q$ is the vector of 1's, $H({\tt 1})$  can be used to determine whether $h$ is lowpass or highpass. 

For a lowpass filter $h$, the associated refinement mask $\tau$ is defined as
$
\tau(\ome):={1\over \sqrt{q}}\sum_{k\in\Zd}h(k)e^{-ik\cdot\ome}.
$
Then $\tau$ is a Laurent trigonometric polynomial, $\tau(0)=1$, and  
\be
\label{eq:maskH}
\tau(\ome)={1\over \sqrt{q}}H(e^{i\ome})={1\over \sqrt{q}}\sum_{\nu\in\capset}e^{-i\nu\cdot\ome}H_\nu(e^{i\dil^\ast\ome}),\quad \ome\in [-\pi,\pi]^\dm.
\ee 
A refinement mask $\tau$ (or the associated filter $h$, or the polyphase representation ${\tt H}(z)$) satisfies the {\it accuracy conditions of order} $N\in\NN_0$ if
\be
\label{eq:aomask}
\tau\hbox{ has a zero of order $N$ at each }
  \gam\in\dualset\bks0.
\ee
It has {\it positive accuracy} if it satisfies the accuracy conditions of order at least one. Thus the lowpass filter $h$ has positive accuracy if and only if $H_\nu({\tt 1})=1/\sqrt{q}$, $\nu\in\capset$. 

A function $\phi\in L_2(\Rd)$ is called {\it refinable} if 
$
\hatphi(\dil^\ast\cdot)=\tau\hatphi$,
where, for $f\in L_1(\Rd)\cap L_2(\Rd)$, $\hatf(\ome):=\int_{\Rd}f(y)e^{-iy\cdot\ome}dy$.
Since, for a given refinement mask $\tau$, there exists a unique compactly supported distribution $\phi$ satisfying this refinability condition, subject to the condition $\hatphi(0)=1$ \cite{CDM}, 
we assume that $\hatphi(0)=1$. 
The order of accuracy conditions of $\tau$ is equivalent to the Strang-Fix (SF) order of the associated refinable function $\phi$ if $\phi$ is a stable $L_2(\Rd)$-function. A compactly supported function $\phi\in L_2(\Rd)$
is stable if $\hatphi$ does not have a $2\pi$-periodic zero in $\Rd$. 
Refinement masks/refinable functions play an important role in wavelet construction under the Multiresolution analysis setting \cite{Ma1}. 

\subsection{New methodology for constructing tight wavelet filter banks}
\label{subS:newFB}
Let $h$ be a lowpass filter with positive accuracy, and let ${\tt H}(z)\in\mathcal{M}_q(z)$ be its polyphase representation. Suppose that there exists a Laurent polynomial $m_{\tt H}(z)$ such that $2-{\tt H}^\ast(z){\tt H}(z)=|m_{\tt H}(z)|^2$. Then, by Theorem~\ref{thm:N(z)}  we see that 
$$\Phi_{\tt H}(z){\rm diag }([m_{\tt H}(z), 1, \dots, 1])
=\left[\begin{array}{cc} 
m_{\tt H}(z){\tt H}(z)& {\tt I}- {\tt H}(z){\tt H}^\ast(z)\\
\end{array}\right]$$
is paraunitary, i.e. $\Phi_{\tt H}(z)$ is scalable (cf. Section~\ref{S:Intro}).

As discussed in Section~\ref{S:Intro}, the LP$^2$ matrix $\Phi_{\tt H}(z)$ is paraunitary if and only if ${\tt H}^\ast(z){\tt H}(z)= 1$, $\forall z\in\Td$. Therefore, when $\Phi_{\tt H}(z)$ itself is not paraunitary, scaling it as above can  result in transforming a non-paraunitary matrix $\Phi_{\tt H}(z)$ into a paraunitary matrix $\Phi_{\tt H}(z) {\rm diag }([m_{\tt H}(z), 1, \dots, 1])$. In fact, such a scaling is special in the sense that it modifies only the first column of $\Phi_{\tt H}(z)$, from ${\tt H}(z)$ to $m_{\tt H}(z){\tt H}(z)$, while keeping all the other columns intact.

A key assumption in the above approach is the existence of a Laurent polynomial $m_{\tt H}(z)$ such that $2-{\tt H}^\ast(z){\tt H}(z)=|m_{\tt H}(z)|^2$. For such a factorization to exist, it is necessary that $2-{\tt H}^\ast(z){\tt H}(z)\ge 0$, for all $z\in\Td$.
Since ${\tt H}^\ast(z){\tt H}(z)=\sum_{\nu\in\capset}|H_\nu(z)|^2$, where ${\tt H}(z)=[H_{\nu_0}(z),H_{\nu_1}(z),\dots,H_{\nu_{q-1}}(z)]^T$, in order to check the condition ${\tt H}^\ast(z){\tt H}(z)\le 2, \forall z\in\Td$, it suffices to  bound $H_\nu(z)$ for each $\nu\in\capset$. 
In some cases, it might be easier to deduce this  from $\tau$  (cf. Example 2 in Section~\ref{subS:example}), as illustrated by  the following result. 

\begin{lemma}
\label{lemma:HstarH}
${\tt H}^\ast(e^{i\dil^\ast\ome}){\tt H}(e^{i\dil^\ast\ome})=\sum_{\gam\in\dualset}|\tau(\ome+\gam)|^2$, for all $\ome\in[-\pi,\pi]^\dm.$
\qquad\endproof
\end{lemma}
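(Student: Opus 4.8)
The plan is to unwind the definitions on both sides and match the frequency sums. First I would recall from equation~(\ref{eq:maskH}) that the refinement mask satisfies $\tau(\ome)=\frac{1}{\sqrt{q}}\sum_{\nu\in\capset}e^{-i\nu\cdot\ome}H_\nu(e^{i\dil^\ast\ome})$. The right-hand side of the lemma involves $\sum_{\gam\in\dualset}|\tau(\ome+\gam)|^2$, so I would substitute this expression for $\tau(\ome+\gam)$ and expand the modulus squared as a double sum over $\nu,\nu'\in\capset$. After interchanging the order of summation, this produces
$$\sum_{\gam\in\dualset}|\tau(\ome+\gam)|^2=\frac{1}{q}\sum_{\nu,\nu'\in\capset}e^{-i(\nu-\nu')\cdot\ome}H_\nu(e^{i\dil^\ast(\ome+\gam)})\overline{H_{\nu'}(e^{i\dil^\ast(\ome+\gam)})}\sum_{\gam\in\dualset}e^{-i(\nu-\nu')\cdot\gam}.$$

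The crux of the argument, and the step I expect to be the main obstacle, is evaluating the inner sum $\sum_{\gam\in\dualset}e^{-i(\nu-\nu')\cdot\gam}$ and handling the $\gam$-dependence inside the polyphase terms. For the latter, I would use that $\dil^\ast\gam\in 2\pi\ZZ^\dm$ for $\gam\in\dualset$ (since $\gam\in 2\pi((\dil^\ast)^{-1}\Zd)/\Zd$ means $(\dil^\ast)^{-1}$ applied appropriately lands in $\ZZ^\dm$ modulo the lattice), so that $e^{i\dil^\ast(\ome+\gam)}=e^{i\dil^\ast\ome}$ and the polyphase factors $H_\nu(e^{i\dil^\ast(\ome+\gam)})=H_\nu(e^{i\dil^\ast\ome})$ are independent of $\gam$. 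This is the key structural fact that makes the polyphase decomposition work, and I would verify it carefully from the coset definitions given for $\capset$ and $\dualset$.

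With the polyphase factors pulled out of the $\gam$-sum, it remains to apply the orthogonality relation for characters on the finite quotient group: $\sum_{\gam\in\dualset}e^{-i(\nu-\nu')\cdot\gam}=q$ if $\nu\equiv\nu'\pmod{\dil\Zd}$ and $0$ otherwise. Since $\capset$ is a set of distinct coset representatives, $\nu\equiv\nu'$ forces $\nu=\nu'$, which kills all off-diagonal terms and collapses the double sum to the diagonal. The surviving diagonal terms give $\frac{1}{q}\sum_{\nu\in\capset}|H_\nu(e^{i\dil^\ast\ome})|^2\cdot q=\sum_{\nu\in\capset}|H_\nu(e^{i\dil^\ast\ome})|^2$, and the cross-term phase $e^{-i(\nu-\nu')\cdot\ome}$ equals $1$ on the diagonal. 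Finally I would identify this with ${\tt H}^\ast(e^{i\dil^\ast\ome}){\tt H}(e^{i\dil^\ast\ome})=\sum_{\nu\in\capset}|H_\nu(e^{i\dil^\ast\ome})|^2$ using the definition of the conjugate transpose, completing the proof.
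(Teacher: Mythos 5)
Your proof is correct, but it runs the computation in the opposite direction from the paper's, so the two are worth contrasting. The paper starts from the left-hand side: it expresses each polyphase component through the inverse relation $H_\nu(e^{i\dil^\ast\ome})={1\over q}\sum_{\gam\in\dualset}e^{i(\ome+\gam)\cdot\nu} H(e^{i(\ome+\gam)})$, expands $\sum_{\nu\in\capset}|H_\nu(e^{i\dil^\ast\ome})|^2$ into a double sum over $\gam,\tilde\gam\in\dualset$, and removes the off-diagonal terms with the character sum over $\capset$, namely $\sum_{\nu\in\capset}e^{i(\gam-\tilde\gam)\cdot\nu}=q$ if $\gam=\tilde\gam$ and $0$ otherwise. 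You instead start from the right-hand side, substitute the forward relation (\ref{eq:maskH}), and invoke the dual orthogonality relation, summing characters over $\dualset$: $\sum_{\gam\in\dualset}e^{-i(\nu-\nu')\cdot\gam}=q$ when $\nu\equiv\nu'\pmod{\dil\Zd}$ and $0$ otherwise, which for distinct representatives in $\capset$ collapses the double sum to the diagonal. Each route has one ingredient the other does not: yours needs the periodicity $e^{i\dil^\ast(\ome+\gam)}=e^{i\dil^\ast\ome}$ for $\gam\in\dualset$ (which you correctly identify as the structural crux and justify from $\dil^\ast\gam\in 2\pi\Zd$), while the paper's needs the inverse polyphase formula, which it only asserts as ``easy to observe'' and whose derivation is essentially your character sum in disguise. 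So your argument is arguably more self-contained, resting only on the displayed identity (\ref{eq:maskH}) plus periodicity, whereas the paper's has the advantage of manipulating the quantity ${\tt H}^\ast(e^{i\dil^\ast\ome}){\tt H}(e^{i\dil^\ast\ome})$ directly from the start. One cosmetic flaw in your write-up: in your displayed equation the factors $H_\nu(e^{i\dil^\ast(\ome+\gam)})$ still depend on $\gam$ yet sit outside the sum over $\gam$, so as printed the line is ill-formed; the periodicity observation that licenses pulling them out should be stated before, not after, that display.
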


\begin{proof}
It is easy to observe that, for all $\nu\in\capset$ and for all $\ome\in[-\pi,\pi]^\dm$,
$H_\nu(e^{i\dil^\ast\ome})={1\over q}\sum_{\gam\in\Gam^\ast}e^{i(\ome+\gam)\cdot\nu} H(e^{i(\ome+\gam)}).$
Together with ${\tt H}^\ast(z){\tt H}(z)=\sum_{\nu\in\capset}|H_\nu(z)|^2$, we obtain the desired identity 
\beaN
&{}&{\tt H}^\ast(e^{i\dil^\ast\ome}){\tt H}(e^{i\dil^\ast\ome})=\sum_{\nu\in\capset}|H_\nu(e^{i\dil^\ast\ome})|^2\\
&=&\sum_{\nu\in\capset}\left({1\over q}\sum_{\gam\in\Gam^\ast}e^{i(\ome+\gam)\cdot\nu} H(e^{i(\ome+\gam)})\right)\left({1\over q}\sum_{\tilde\gam\in\Gam^\ast}e^{-i(\ome+\tilde\gam)\cdot\nu} \overline{H(e^{i(\ome+\tilde\gam)})}\right)\\
&=&\sum_{\nu\in\capset}{1\over q^2}\sum_{\gam\in\Gam^\ast}\sum_{\tilde\gam\in\Gam^\ast}e^{i(\gam-\tilde\gam)\cdot\nu} H(e^{i(\ome+\gam)})\overline{H(e^{i(\ome+\tilde\gam)})}\\
&=&{1\over q^2}\sum_{\gam\in\Gam^\ast}\sum_{\tilde\gam\in\Gam^\ast}\left(\sum_{\nu\in\capset}e^{i(\gam-\tilde\gam)\cdot\nu} \right)H(e^{i(\ome+\gam)})\overline{H(e^{i(\ome+\tilde\gam)})}
=\sum_{\gam\in\Gam^\ast}|\tau(\ome+\gam)|^2,
\eeaN
where the relation between $\tau(\ome)$ and $H(e^{i\ome})$ (cf. (\ref{eq:maskH})) and the following identity 
$$
\sum_{\nu\in\capset}(e^{i\gam})^\nu
  =\cases{q,& if $\gam=0$,\cr
          0,             & if $\gam\in\capset^\ast\bks\{0\}$,\cr}
$$
are used for the last equality.
\end{proof}

The factorization of $2-{\tt H}^\ast(z){\tt H}(z)$ can be dealt with ease for $1$-D case by using the  well-known Fej\'er-Riesz lemma.

\begin{lemma}[Fej\'er-Riesz Lemma, \cite{Fejer,Riesz}]
\label{lemma:FR}
Suppose $P(z)=\sum_{k=-r}^r p(k) z^{-k}\ge 0$, for all $z\in\TT$. Then there exists a $1$-D Laurent polynomial $Q(z)=\sum_{k=0}^rq(k) z^{-k}$ such that $P(z)=|Q(z)|^2, \forall z\in\TT.$ 
\qquad\endproof
\end{lemma}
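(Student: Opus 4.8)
The plan is to reduce this trigonometric statement to a factorization of an ordinary algebraic polynomial and then to group its roots. First I would clear denominators: since $P(z)=\sum_{k=-r}^r p(k)z^{-k}$, the product $\tilde P(z):=z^r P(z)=\sum_{k=-r}^r p(k)z^{r-k}$ is a genuine polynomial in $z$. Discarding the trivial case $P\equiv 0$ (take $Q=0$) and, if necessary, lowering $r$, I may assume the top coefficient $p(-r)\neq 0$, so $\deg\tilde P=2r$ and $\tilde P(0)=p(r)\ne 0$; in particular $z=0$ is not a root of $\tilde P$.

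Next I would record the two symmetries forced by the hypotheses. Because $P$ is real valued and nonnegative on $\TT$ and has real coefficients, evaluating at $|z|=1$, where $\overline z=z^{-1}$, gives $\overline{P(z)}=P(z^{-1})$; since $P(z)\in\RR$ this yields $P(z)=P(z^{-1})$, i.e.\ $p(k)=p(-k)$. Transcribed to $\tilde P$, this is the self-inversive relation $\tilde P(z)=z^{2r}\tilde P(1/z)$, which shows that the $2r$ roots of $\tilde P$ occur in reciprocal pairs $\{w,1/w\}$; combined with the real-coefficient symmetry $w\mapsto\overline w$, the roots off the unit circle fall into quadruples $\{w,\overline w,1/w,1/\overline w\}$. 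The second symmetry is the sign condition: since $P\ge 0$ on $\TT$ and $P$ would change sign across a circle root of odd multiplicity, every unimodular root of $\tilde P$ must have \emph{even} multiplicity.

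With the root structure in hand I would construct $Q$ explicitly. Factor $\tilde P(z)=p(-r)\prod_j(z-w_j)$ and let $\calR$ be the multiset consisting of all roots in the open unit disk, each with its full multiplicity, together with each unimodular root taken with exactly half of its (even) multiplicity. By the reciprocal pairing and the even-order-on-the-circle observation, $\calR$ contains exactly $r$ elements counted with multiplicity, and by the conjugation symmetry $\calR$ is closed under $w\mapsto\overline w$, so $\prod_{w\in\calR}(z-w)$ has real coefficients. I then set $Q(z):=c\,z^{-r}\prod_{w\in\calR}(z-w)$ for a constant $c$ to be fixed. This is exactly of the claimed shape $\sum_{k=0}^r q(k)z^{-k}$, because $z^r Q(z)=c\prod_{w\in\calR}(z-w)$ is a polynomial in $z$ of degree $r$.

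Finally I would verify $|Q(z)|^2=P(z)$ on $\TT$ and pin down $c$. Since $Q$ has real coefficients, on the unit circle $|Q(z)|^2=Q(z)\overline{Q(z)}=Q(z)Q(1/z)$; moreover, using $\overline{z-w}=z^{-1}-\overline w=-\overline w\,z^{-1}(z-1/\overline w)$, each factor of $\overline{Q}$ carries the reciprocal-conjugate root $1/\overline w$. Thus the zeros of $Q(z)Q(1/z)$, counted with multiplicity, reproduce the full root set of $\tilde P$: the disk roots in $\calR$ supply their reciprocal-conjugates $1/\overline w$ as the exterior roots, while each circle root of multiplicity $2m$ receives $m$ zeros from $Q$ and $m$ from $\overline Q$. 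Hence $Q(z)Q(1/z)$ and $P(z)$ are Laurent polynomials with support $\{-r,\dots,r\}$ and identical zero sets, so they agree up to a multiplicative constant; choosing $|c|^2$ to be the ratio of the corresponding coefficients—positive because $P\ge 0$—forces equality. The one step that demands genuine care is precisely this bookkeeping of multiplicities: splitting each unimodular root evenly between $Q$ and $\overline Q$, and checking that the resulting proportionality constant is a nonnegative real so that a valid $c$ exists. Everything else is dictated by the two symmetries above.
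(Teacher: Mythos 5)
Your proof is correct. Note, however, that the paper does not prove this lemma at all: it is stated as a classical result with citations to Fej\'er and Riesz, and the \(\endproof\) mark after the statement signals that it is being quoted, not established. So your argument is not paralleling or diverging from anything in the paper; it supplies the standard (essentially Riesz's original) proof, and it does so correctly: passing to the polynomial $\tilde P(z)=z^rP(z)$, using real-valuedness and realness of coefficients to get the self-inversive symmetry $\tilde P(z)=z^{2r}\tilde P(1/z)$, using nonnegativity on $\TT$ to force even multiplicity at unimodular roots, and then forming $Q$ from the interior roots plus half of each circle root. The two points that genuinely require care are exactly the ones you flagged: the even-multiplicity claim (which deserves the one-line local expansion of $\theta\mapsto P(e^{i\theta})$ near a circle root to justify ``would change sign''), and the bookkeeping showing $Q(z)Q(1/z)$ and $\tilde P$ have the same $2r$ roots with multiplicity, so they agree up to a constant that nonnegativity of both sides makes positive. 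One cosmetic remark: you invoke $\tilde P(0)=p(r)\neq 0$ before deriving the symmetry $p(k)=p(-k)$ that justifies it; reordering those two sentences removes the apparent circularity.
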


Using this result, we obtain the following new method for constructing $1$-D tight wavelet filter banks. 

\begin{theorem}
\label{thm:construction_oneD}
Let $h$ be a $1$-D lowpass filter with positive accuracy and dilation $\lambda\ge 2$, and let ${\tt H}(z)$ be its polyphase representation. Suppose $2-{\tt H}^\ast(z){\tt H}(z)>0$, $\forall z\in\TT$. Then there is a polynomial $m_{\tt H}(z)$ such that $[m_{\tt H}(z){\tt H}(z), {\tt I}- {\tt H}(z){\tt H}^\ast(z)]$
gives rise to a tight wavelet filter bank whose lowpass filter $\widetilde{h}$  is associated with $m_{\tt H}(z){\tt H}(z)$ and has the same accuracy as $h$. Furthermore, if the support of $h$ is contained in $\{0,1,\dots,s\}$, then the support of $\widetilde{h}$ is contained in $\{0,1,\dots, 2s\}$.
\qquad\endproof
\end{theorem}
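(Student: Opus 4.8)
The plan is to assemble the claimed tight wavelet filter bank from the scaling machinery of Section~\ref{S:LPmain} together with the Fej\'er--Riesz factorization, and then to track how the accuracy and support pass from $h$ to $\widetilde{h}$. First I would apply Lemma~\ref{lemma:FR} to the $1$-D Laurent polynomial $P(z):=2-{\tt H}^\ast(z){\tt H}(z)$. The hypothesis $P(z)>0$ on $\TT$ gives in particular $P(z)\ge 0$, so the lemma produces a $1$-D Laurent polynomial $m_{\tt H}(z)=\sum_{k=0}^{r}q(k)z^{-k}$ with $2-{\tt H}^\ast(z){\tt H}(z)=|m_{\tt H}(z)|^2$ for all $z\in\TT$. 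By the discussion in Section~\ref{subS:newFB} (the scaling construction built on Theorem~\ref{thm:N(z)}), the scaled matrix $\Phi_{\tt H}(z){\rm diag}([m_{\tt H}(z),1,\dots,1])=[m_{\tt H}(z){\tt H}(z),\ {\tt I}-{\tt H}(z){\tt H}^\ast(z)]$ is then paraunitary, and as noted there this yields a tight filter bank. Defining $\widetilde{h}$ to be the filter whose polyphase representation is $m_{\tt H}(z){\tt H}(z)$, the fact that exactly one filter (the lowpass one, coming from the first column $m_{\tt H}(z){\tt H}(z)$) is lowpass while the columns of ${\tt I}-{\tt H}(z){\tt H}^\ast(z)$ give the highpass filters makes this a tight \emph{wavelet} filter bank in the sense reviewed in Section~\ref{S:wavelet}.

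Next I would verify the accuracy claim. Since $h$ is lowpass with positive accuracy, $H_\nu({\tt 1})=1/\sqrt{q}$ for all $\nu\in\capset$, i.e. ${\tt H}({\tt 1})=\frac{1}{\sqrt q}{\tt 1}$, whence ${\tt H}^\ast({\tt 1}){\tt H}({\tt 1})=q\cdot\frac1q=1$ and so $|m_{\tt H}({\tt 1})|^2=2-1=1$. Thus $m_{\tt H}$ does not vanish at $z={\tt 1}$ and the scaling multiplies each polyphase component $H_\nu$ by a factor that is nonzero (in fact of unit modulus) at the point ${\tt 1}$; I expect this to show that $m_{\tt H}(z){\tt H}(z)$ still has positive accuracy and, more precisely, that the order of accuracy is preserved because accuracy is governed by the order of vanishing of the mask $\tau$ at the nonzero dual-coset points $\gam\in\dualset\setminus 0$ (cf. \eqref{eq:aomask}), and multiplication by the scalar $m_{\tt H}$ — which is nonvanishing where it matters — does not change these vanishing orders. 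The main technical point to get right here is the precise relationship between the polyphase scaling $m_{\tt H}(z){\tt H}(z)$ and the resulting mask $\widetilde\tau$ via \eqref{eq:maskH}, and confirming that the accuracy conditions of $h$ transfer verbatim.

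For the support statement I would argue degree-theoretically. If $\supp h\subset\{0,1,\dots,s\}$, then each polyphase component $H_\nu(z)$ is a polynomial in $z^{-1}$ whose degree is controlled by $s$, so the entries of ${\tt H}(z)$ have a bounded span; the Fej\'er--Riesz factor $m_{\tt H}$ has degree $r\le \deg_z P$, and $\deg P$ is bounded by $\deg({\tt H}^\ast{\tt H})$, which in turn is controlled by the span of $h$. Combining these degree bounds with the relation $\widetilde{h}\leftrightarrow m_{\tt H}(z){\tt H}(z)$ and tracking through \eqref{eq:maskH} should yield $\supp\widetilde{h}\subset\{0,1,\dots,2s\}$: the scaling at most doubles the effective length of the filter. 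I expect the main obstacle to be exactly this bookkeeping step — carefully converting the polyphase-level degree bound on $m_{\tt H}(z){\tt H}(z)$ into a support statement for $\widetilde{h}$ itself, since one must account both for the spread introduced by $m_{\tt H}$ and for the interleaving inherent in the polyphase-to-filter correspondence, and verify that these combine to give the clean factor-of-two bound rather than something larger.
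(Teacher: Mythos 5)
Your proposal follows the same route as the paper's proof: Fej\'er--Riesz factorization of $2-{\tt H}^\ast(z){\tt H}(z)$ (Lemma~\ref{lemma:FR}) combined with the scaling identity of Theorem~\ref{thm:N(z)} to get paraunitarity, and then transfer of accuracy and support through the mask relation. The accuracy part of your sketch is fine once one writes down the relation you flag as the ``main technical point'': from (\ref{eq:maskH}) one gets $\widetilde\tau(\ome)=m_{\tt H}(e^{i\lambda\ome})\tau(\ome)$, and $m_{\tt H}(e^{i\lambda\cdot})$ vanishes \emph{nowhere} on $[-\pi,\pi]$ because $|m_{\tt H}(e^{i\lambda\ome})|^2=2-{\tt H}^\ast(e^{i\lambda\ome}){\tt H}(e^{i\lambda\ome})>0$, so the vanishing orders of $\tau$ and $\widetilde\tau$ at each $\gam\in\dualset\bks 0$ coincide. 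However, you state as ``the fact that'' the columns of ${\tt I}-{\tt H}(z){\tt H}^\ast(z)$ give highpass filters; this is not automatic, and it is precisely where the hypothesis of positive accuracy enters. The paper proves it: the coefficient sum of the filter attached to the $j$-th column of ${\tt I}-{\tt H}(z){\tt H}^\ast(z)$ equals $1-\bigl(\sum_{\nu\in\capset}H_\nu(1)\bigr)\overline{H_{\nu_{j-1}}(1)}$, which vanishes because lowpass gives $\sum_{\nu}H_\nu(1)=\sqrt\lambda$ while positive accuracy gives $H_{\nu_{j-1}}(1)=1/\sqrt\lambda$. Without this computation you have a tight filter bank but no proof that it is a tight \emph{wavelet} filter bank. (Your observation $|m_{\tt H}(1)|=1$ bears on $\widetilde h$ being lowpass, not on the other columns being highpass.)

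The second gap is the support bound, which you explicitly defer as ``bookkeeping'' --- and your uniform degree count, as stated, does not give $2s$. Indeed $\deg_{z^{-1}}H_\nu\le\lfloor s/\lambda\rfloor$ for each $\nu$ and Fej\'er--Riesz gives $\deg_{z^{-1}}m_{\tt H}\le\lfloor s/\lambda\rfloor$, so the crude polyphase-to-filter conversion bounds the support of $\widetilde h$ only by $2\lambda\lfloor s/\lambda\rfloor+\lambda-1$, which exceeds $2s$ in general (take $s=\lambda$). The estimate can be rescued by using the per-component bound $\lambda\deg_{z^{-1}}H_\nu+\nu\le s$, which yields support indices at most $\lambda\deg_{z^{-1}}m_{\tt H}+s\le 2s$; but the paper sidesteps this bookkeeping entirely by working at the mask level: by Lemma~\ref{lemma:HstarH}, $2-{\tt H}^\ast(e^{i\lambda\ome}){\tt H}(e^{i\lambda\ome})=2-\sum_{\gam\in\dualset}|\tau(\ome+\gam)|^2$ is a trigonometric polynomial in $\ome$ with frequencies in $\{-s,\dots,s\}$, so $m_{\tt H}$ can be chosen so that $m_{\tt H}(e^{i\lambda\ome})=\sum_{k=0}^{s}b(k)e^{-ik\ome}$, and then $\widetilde\tau=m_{\tt H}(e^{i\lambda\cdot})\tau$ manifestly has frequencies in $\{0,\dots,2s\}$. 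Either way, this step needs to be carried out, not just anticipated, for the theorem's sharp factor-of-two claim to stand.
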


\begin{proof}
Since $2-{\tt H}^\ast(z){\tt H}(z)>0, \forall z\in\TT$, by Lemma~\ref{lemma:FR}, there exists an $m_{\tt H}(z)$ such that $2-{\tt H}^\ast(z){\tt H}(z)=|m_{\tt H}(z)|^2$, $\forall z\in\TT$. Thus, $[m_{\tt H}(z){\tt H}(z), {\tt I}- {\tt H}(z){\tt H}^\ast(z)]$ is paraunitary, i.e. it gives rise to a tight filter bank. Furthermore, observe that the sum of the $j$-th column of ${\tt I}- {\tt H}(z){\tt H}^\ast(z)$ evaluated at $z=1$ is equal to 
\be
\label{eq:colofIminusHHstar}
1-\left(\sum_{\nu\in\capset}H_\nu(1)\right)\overline{H_{\nu_{j-1}}(1)}.
\ee
Using the fact that $h$ is lowpass with positive accuracy, which implies $\sum_{\nu\in\capset}H_\nu(1)=\sqrt{\lambda}$ and $H_{\nu_{j-1}}(1)=1/\sqrt{\lambda}$, we get that the number in (\ref{eq:colofIminusHHstar}) is equal to zero for each $j=1,\dots,\lambda$, which means that the filters associated with the columns of ${\tt I}- {\tt H}(z){\tt H}^\ast(z)$ are all highpass. Hence our tight filter bank is actually a wavelet tight filter bank.

Let $\tau$ and $\widetilde{\tau}$ be the refinement masks associated with ${\tt H}(z)$ and $m_{\tt H}(z){\tt H}(z)$, respectively. Then  (\ref{eq:maskH}) implies
$$
\widetilde\tau(\ome)={1\over \sqrt{\lambda}}\sum_{\nu\in\capset}e^{-i\ome\cdot\nu}m_{\tt H}(e^{i\lambda\ome})H_\nu(e^{i\lambda\ome})=m_{\tt H}(e^{i\lambda\ome})\tau(\ome).$$
Since $2-{\tt H}^\ast(e^{i\lambda\ome}){\tt H}(e^{i\lambda\ome})=|m_{\tt H}(e^{i\lambda\ome})|^2>0$, $\forall \ome\in [-\pi,\pi]$, we see that $m_{\tt H}(e^{i\lambda\cdot})$ does not vanish on $[-\pi,\pi]$. Therefore, $\tau$ and $\widetilde\tau$ have exactly the same accuracy (cf. (\ref{eq:aomask})).

To prove the result about the support of filters, we note that, since $\tau(\ome)={1\over \sqrt{\lambda}}\sum_{k=0}^{s}h(k)e^{-i k\ome}$, from Lemma~\ref{lemma:HstarH}, $2-{\tt H}^\ast(e^{i\lambda \ome}){\tt H}(e^{i\lambda \ome})=\sum_{k=-s}^s a(k) e^{-ik\ome}$. Hence, by Lemma~\ref{lemma:FR}, we see that the Laurent polynomial $m_{\tt H}(z)$ with $|m_{\tt H}(z)|^2=2-{\tt H}^\ast(z){\tt H}(z)$ can be chosen so that $m_{\tt H}(e^{i\lambda\ome})=\sum_{k=0}^sb(k)e^{-ik\ome}$. 
Therefore, the support of $\widetilde{h}$ associated with $m_{\tt H}(e^{i\lambda\ome})\tau(\ome)$ is contained in $\{0,1,\dots,2s\}$, as desired.
\end{proof}

For the multi-D case, factoring $2-{\tt H}^\ast(z){\tt H}(z)$ into $|m_{\tt H}(z)|^2$ for some Laurent polynomial $m_{\tt H}(z)$ is a nontrivial problem, which we hope to address in the future. However, once such an $m_{\tt H}(z)$ exists, by using similar arguments as in the proof of Theorem~\ref{thm:construction_oneD} (thus we omit the proof for multi-D case), we obtain the following result. 

\begin{theorem}
\label{thm:construction_nD}
Let $h$ be an $n$-D, $n\ge 2$, lowpass filter with positive accuracy, and let ${\tt H}(z)$ be its polyphase representation. 
Suppose that $2-{\tt H}^\ast(z){\tt H}(z)>0, \forall z\in\Td$ and that there exists a Laurent polynomial $m_{\tt H}(z)$ such that $2-{\tt H}^\ast(z){\tt H}(z)=|m_{\tt H}(z)|^2, \forall z\in\Td$. 
Then $[m_{\tt H}(z){\tt H}(z), {\tt I}- {\tt H}(z){\tt H}^\ast(z)]$
gives rise to a tight wavelet filter bank whose lowpass filter  is associated with $m_{\tt H}(z){\tt H}(z)$ and has the same accuracy as $h$.
\qquad\endproof
\end{theorem}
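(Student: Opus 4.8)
The plan is to follow the proof of Theorem~\ref{thm:construction_oneD} almost line by line, since the only $1$-D--specific ingredient there is the invocation of the Fej\'er-Riesz Lemma to produce the scaling factor $m_{\tt H}(z)$. Here that factor is supplied directly by the hypothesis, so I would begin by dropping the Fej\'er-Riesz step entirely and starting from the assumed existence of a Laurent polynomial $m_{\tt H}(z)$ with $2-{\tt H}^\ast(z){\tt H}(z)=|m_{\tt H}(z)|^2$ for all $z\in\Td$. With this in hand, Theorem~\ref{thm:N(z)} immediately gives that
$$
\Phi_{\tt H}(z)\,{\rm diag}([m_{\tt H}(z),1,\dots,1])=[\,m_{\tt H}(z){\tt H}(z),\ {\tt I}-{\tt H}(z){\tt H}^\ast(z)\,]
$$
is paraunitary, hence gives rise to a tight filter bank by the discussion in Section~\ref{S:Intro}.

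Next I would verify that this tight filter bank is in fact a \emph{wavelet} tight filter bank, i.e. that exactly one filter is lowpass and all the others are highpass. The key computation is identical to the $1$-D case: the sum of the $j$-th column of ${\tt I}-{\tt H}(z){\tt H}^\ast(z)$ evaluated at $z={\tt 1}$ equals $1-\bigl(\sum_{\nu\in\capset}H_\nu({\tt 1})\bigr)\overline{H_{\nu_{j-1}}({\tt 1})}$, and since $h$ is lowpass with positive accuracy we have $\sum_{\nu\in\capset}H_\nu({\tt 1})=\sqrt{q}$ and $H_{\nu_{j-1}}({\tt 1})=1/\sqrt{q}$ for every $\nu\in\capset$, so this number vanishes for each $j$. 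Thus all the columns of ${\tt I}-{\tt H}(z){\tt H}^\ast(z)$ give highpass filters, and the remaining column $m_{\tt H}(z){\tt H}(z)$ is the single lowpass filter. I note here that nothing in this argument used $n=1$; the positive-accuracy characterization $\sum_{\nu\in\capset}H_\nu({\tt 1})=\sqrt{q}$ and $H_\nu({\tt 1})=1/\sqrt{q}$ holds verbatim in $n$ dimensions, which is precisely why the proof transfers.

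Finally I would confirm the accuracy claim. Writing $\tau$ and $\widetilde\tau$ for the refinement masks associated with ${\tt H}(z)$ and $m_{\tt H}(z){\tt H}(z)$ respectively, formula (\ref{eq:maskH}) yields $\widetilde\tau(\ome)=m_{\tt H}(e^{i\dil^\ast\ome})\tau(\ome)$. The hypothesis $2-{\tt H}^\ast(z){\tt H}(z)>0$ for all $z\in\Td$ forces $|m_{\tt H}(e^{i\dil^\ast\ome})|^2>0$, so $m_{\tt H}(e^{i\dil^\ast\cdot})$ never vanishes on $[-\pi,\pi]^\dm$; therefore $\widetilde\tau$ has a zero of order $N$ at a point $\gam\in\dualset\bks0$ exactly when $\tau$ does, and the accuracy conditions (\ref{eq:aomask}) give that $\tau$ and $\widetilde\tau$ have the same accuracy.

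The one genuine subtlety, and the reason the theorem is stated with the existence of $m_{\tt H}(z)$ as a hypothesis rather than a conclusion, is that in the multivariate setting positivity of $2-{\tt H}^\ast(z){\tt H}(z)$ no longer guarantees a Laurent-polynomial square root: the Fej\'er-Riesz Lemma has no direct analogue for $n\ge2$ without additional factorization theory (cf. \cite{GeWo,GL}). So I expect no obstacle in the proof as stated, precisely because the hard step has been assumed away; the multivariate factorization is the genuine difficulty, and it is deferred to future work. Given all this, the cleanest presentation simply remarks that the argument of Theorem~\ref{thm:construction_oneD} applies mutatis mutandis once $m_{\tt H}(z)$ is assumed to exist, which is exactly what the paper does in omitting the proof.
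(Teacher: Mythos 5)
Your proposal is correct and follows exactly the route the paper intends: the paper itself omits the proof of Theorem~\ref{thm:construction_nD}, stating that once $m_{\tt H}(z)$ is assumed to exist, the argument of Theorem~\ref{thm:construction_oneD} applies with the Fej\'er-Riesz step removed, which is precisely what you carried out (paraunitarity via Theorem~\ref{thm:N(z)}, the highpass column-sum computation at $z={\tt 1}$, and the accuracy comparison via the nonvanishing of $m_{\tt H}(e^{i\dil^\ast\cdot})$). Your remark that the positive-accuracy characterization $H_\nu({\tt 1})=1/\sqrt{q}$ and the mask relation (\ref{eq:maskH}) hold verbatim in $n$ dimensions correctly identifies why the transfer is legitimate.
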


\subsection{New tight wavelet frames for $L_2(\RR)$}
\label{subS:newframe}

A tight wavelet frame for $L_2(\Rd)$ is a generalization of the orthonormal wavelet basis for $L_2(\Rd)$, and is developed out of an effort to overcome some of the limitations of the orthonormal wavelet bases. The theory, algorithms, and applications of tight wavelet frames are extensively studied in the literature (for example,  see \cite{DHRS,CPSS} and references therein). Constructing tight wavelet frames from tight wavelet filter banks is an important issue but is often not so straightforward.

In this section we consider the univariate case ($n=1$), and present a method for obtaining tight wavelet frames for $L_2(\RR)$ from our 1-D tight wavelet filter banks in the previous subsection. In particular, if we know that the refinable function associated with the new lowpass filter $\widetilde{h}$ defined in Theorem~\ref{thm:construction_oneD} is square integrable over $\RR$, then the associated  filter bank  gives rise to a tight wavelet frame. 

\begin{corollary}
\label{coro:constructionW_oneD}
Let $\phi\in L_2(\RR)$ be a stable refinable function with positive SF order and dilation $\lambda\ge 2$, and let ${\tt H}(z)$ be the associated polyphase representation. Suppose that $2-{\tt H}^\ast(z){\tt H}(z)>0$, $\forall z\in\TT$, and that the compactly supported refinable distribution $\widetilde{\phi}$ associated with $m_{\tt H}(z){\tt H}(z)$, where $|m_{\tt H}(z)|^2=2-{\tt H}^\ast(z){\tt H}(z)$, is in  $L_2(\RR)$. Then $[m_{\tt H}(z){\tt H}(z), {\tt I}- {\tt H}(z){\tt H}^\ast(z)]$ gives rise to a tight wavelet frame, with a stable refinable function $\widetilde{\phi}$ having the same SF order as $\phi$. Furthermore, if the support of $\phi$ is contained in $[0,r]$ with $r\in \NN$, then the support of $\widetilde{\phi}$ is  contained in $[0,2r]$. 
\qquad\endproof
\end{corollary}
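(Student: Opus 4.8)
The plan is to obtain Corollary~\ref{coro:constructionW_oneD} by combining Theorem~\ref{thm:construction_oneD} with the standard passage from tight wavelet filter banks to tight wavelet frames. First I would translate the hypotheses into filter-bank language: since $\phi$ is a stable $L_2(\RR)$ refinable function with positive Strang--Fix order, the equivalence recorded in Section~\ref{S:wavelet} shows that its lowpass filter $h$ has positive accuracy, so the hypotheses of Theorem~\ref{thm:construction_oneD} hold. Applying that theorem produces a polynomial $m_{\tt H}(z)$ with $|m_{\tt H}(z)|^2=2-{\tt H}^\ast(z){\tt H}(z)$ for which $[m_{\tt H}(z){\tt H}(z),\ {\tt I}-{\tt H}(z){\tt H}^\ast(z)]$ is a tight wavelet filter bank; it also records that the new lowpass filter $\widetilde h$, whose mask is $\widetilde\tau(\ome)=m_{\tt H}(e^{i\lambda\ome})\tau(\ome)$, has the same accuracy as $h$, and that $\supp h\subseteq\{0,\dots,s\}$ forces $\supp\widetilde h\subseteq\{0,\dots,2s\}$.

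Next I would upgrade the tight filter bank to a tight wavelet frame. By assumption the compactly supported refinable distribution $\widetilde\phi$ attached to $m_{\tt H}(z){\tt H}(z)$ lies in $L_2(\RR)$, and with the normalization $\hat{\widetilde\phi}(0)=1$ the polyphase paraunitarity supplied by Theorem~\ref{thm:construction_oneD} is exactly the condition needed in the standard extension-principle result (the Unitary Extension Principle; see \cite{DHRS,CPSS}). Invoking that result yields that the wavelet system generated by $\widetilde\phi$ together with the highpass filters coming from the columns of ${\tt I}-{\tt H}(z){\tt H}^\ast(z)$ is a tight wavelet frame for $L_2(\RR)$.

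The delicate point is to show that $\widetilde\phi$ is stable with the same SF order as $\phi$. The key observation is that $m_{\tt H}$ is zero-free on the whole of $\TT$: the proof of Theorem~\ref{thm:construction_oneD} gives $m_{\tt H}(e^{i\lambda\ome})\ne0$ for $\ome\in[-\pi,\pi]$, and since $\lambda\ge2$ the map $\ome\mapsto e^{i\lambda\ome}$ already sweeps out the entire circle, whence $m_{\tt H}(z)\ne0$ for every $z\in\TT$. Iterating the two refinement relations gives $\hat{\widetilde\phi}(\ome)=M(\ome)\,\hat\phi(\ome)$ with $M(\ome)=\prod_{l\ge0}m_{\tt H}(e^{i\lambda^{-l}\ome})$; because positive accuracy forces ${\tt H}^\ast(1){\tt H}(1)=1$ and hence $|m_{\tt H}(1)|=1$ (so $m_{\tt H}(1)=1$ after a sign choice), and $m_{\tt H}$ is smooth and nowhere zero, this product converges and never vanishes. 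Thus $\hat{\widetilde\phi}$ and $\hat\phi$ have identical zero sets; as $\phi$ is stable, $\hat\phi$ has no $2\pi$-periodic zero, so neither does $\hat{\widetilde\phi}$. Since $\widetilde\phi\in L_2(\RR)$, this makes $\widetilde\phi$ stable, and therefore its SF order equals the accuracy of $\widetilde\tau$, which by the theorem equals the accuracy of $\tau$, i.e. the SF order of $\phi$.

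Finally, the support claim follows from the filter-support bound of Theorem~\ref{thm:construction_oneD} and the standard relation between the support of a mask and that of its refinable function, namely that a dilation-$\lambda$ lowpass filter supported in $\{0,\dots,s\}$ yields a refinable function supported in $[0,s/(\lambda-1)]$. Thus $\supp\phi\subseteq[0,r]$ corresponds to $s\le r(\lambda-1)$; the theorem then gives $\supp\widetilde h\subseteq\{0,\dots,2s\}$, so $\supp\widetilde\phi\subseteq[0,2s/(\lambda-1)]\subseteq[0,2r]$. I expect the main obstacle to be the stability argument of the third paragraph, in particular the clean deduction that $m_{\tt H}$ is zero-free on all of $\TT$ (where $\lambda\ge2$ is used essentially) and the resulting factorization $\hat{\widetilde\phi}=M\hat\phi$ with $M$ nowhere vanishing, since this is what simultaneously delivers stability and the preservation of the SF order.
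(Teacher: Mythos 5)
Your proposal is correct and takes essentially the same route as the paper's proof: invoke Theorem~\ref{thm:construction_oneD} to get the tight wavelet filter bank with matching accuracy and filter support, use the hypothesis $\widetilde\phi\in L_2(\RR)$ to upgrade it to a tight wavelet frame, deduce stability and SF-order preservation from the non-vanishing of $m_{\tt H}$ on $\TT$, and convert between filter support and function support (with the factor $\lambda-1$) to get $\supp\widetilde\phi\subseteq[0,2r]$. The only difference is one of detail: the paper asserts the stability/SF-order step in a single sentence, whereas you justify it explicitly via the factorization $\hat{\widetilde\phi}=M\hat\phi$ with the nowhere-vanishing infinite product $M(\ome)=\prod_{l\ge0}m_{\tt H}(e^{i\lambda^{-l}\ome})$, which is the natural way to fill in that assertion.
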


\begin{proof}
From the assumptions that $\widetilde\phi$ is an $L_2(\RR)$-function, $\phi$ is a stable $L_2(\RR)$-function with positive SF order and with dilation factor $\lambda\ge 2$, and that $m_{\tt H}(e^{i\lambda\cdot})$ does not vanish on $[-\pi,\pi]$, we see that $\widetilde{\phi}$ is stable and has the same SF order as $\phi$.

By Theorem~\ref{thm:construction_oneD}, we see that $[m_{\tt H}(z){\tt H}(z), {\tt I}- {\tt H}(z){\tt H}^\ast(z)]$ gives rise to a tight wavelet filter bank whose lowpass filter is $\widetilde{h}$ , where $\widetilde{h}$ is associated with $\widetilde{\phi}$. Since $\widetilde\phi$ is an $L_2(\RR)$-function, this tight wavelet filter bank gives rise to a tight wavelet frame for $L_2(\RR)$.

Since the assumption that the support of $\phi$ is contained in $[0,r]$ implies that the support of its associated lowpass filter $h$ is contained in $\{0,1,\dots,(\lambda-1)r\}$, by Theorem~\ref{thm:construction_oneD}, we see that the support of $\widetilde{h}$ is contained in $\{0,1,\dots,2(\lambda-1)r\}$, which in turn gives that the support of $\widetilde{\phi}$ is contained in $[0,2r]$, which finishes the proof.
\end{proof}

One can always assume that the  refinable function $\phi$ is  ``good'' enough, so that the assumptions on $\phi$ are satisfied. On the other hand, whether $\widetilde\phi$ is in $L_2(\RR)$ or not may not be easily verifiable. In the examples below, we check that $\widetilde\phi \in L_2(\RR)$ by appealing to a general result proved in \cite[Proposition 4.5]{HR4}.  The original result was stated for $\lambda=2$ case and it can be extended to more general case for $\lambda\ge 2$ without much difficulty by following the original arguments closely, hence its proof is omitted. In the statement we use the smoothness class $\mathcal{R}^\alpha$, $\alpha>0$, which is very similar to the class of functions with H\"{o}lder exponent $\alpha$. We refer the aforementioned paper for the exact definition of this smoothness class.

\begin{theorem}[Proposition 4.5 in \cite{HR4}]
\label{thm:riesz}
Let $\lambda\ge 2$ be an integer. Let $\phi\in L_2(\RR)$ be a refinable function associated with refinement mask $\tau$ and with dilation $\lambda$, and let $\widetilde\phi$ be the compactly supported refinable distribution associated with the new refinement mask $\widetilde\tau=\xi(\lambda\cdot)\tau$ and with dilation $\lambda$. Define, for each $j\in\ZZ$ and $\eps>0$,
$$\Ome_{j,\eps}:=\{\ome+i\tet\in\CC: \ome\in \Ome_j,\ |\tet|<\eps\},\quad \hbox{where}\quad\Ome_j:=\{\ome\in\RR:\lambda^jK\le|\ome|\le\lambda^{j+1}K\},$$
with $K$ some positive number.
Let
$$\beta:=-\inf_{\eps>0}\limsup_{j\to\infty}
  {\log_\lambda\norm{\hat{\phi}}_{L_\infty(\Ome_{j,\eps})}\over j}.$$
If $\alp:=\beta-\log_\lambda\norm{\xi}_{L_\infty([-\pi,\pi])}-1>0$, then $\widetilde\phi\in \mathcal{R}^\alpha$. In particular, $\widetilde\phi\in L_2(\RR)$.
\qquad\endproof
\end{theorem}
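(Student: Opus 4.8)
The plan is to reduce the whole statement to a single decay estimate for $\hat{\widetilde\phi}$ on the complexified annuli $\Ome_{j,\eps}$, and then hand that estimate to the definition of the class $\mathcal{R}^\alpha$ from \cite{HR4}. The starting point is an exact factorization of $\hat{\widetilde\phi}$ in terms of $\hatphi$. Iterating the two refinement relations $\hatphi(\lambda\ome)=\tau(\ome)\hatphi(\ome)$ and $\hat{\widetilde\phi}(\lambda\ome)=\widetilde\tau(\ome)\hat{\widetilde\phi}(\ome)=\xi(\lambda\ome)\tau(\ome)\hat{\widetilde\phi}(\ome)$ and dividing, the ratio $g:=\hat{\widetilde\phi}/\hatphi$ obeys $g(\lambda\ome)=\xi(\lambda\ome)g(\ome)$ with $g(0)=1$; unwinding this recursion gives
$$\hat{\widetilde\phi}(\ome)=\Xi(\ome)\,\hatphi(\ome),\qquad \Xi(\ome):=\prod_{i=0}^\infty\xi(\ome/\lambda^i).$$
Here $\xi(0)=1$ is forced by the normalization $\widetilde\tau(0)=\xi(0)\tau(0)=1$, and this is exactly what makes the infinite product converge locally uniformly (since $\xi$ is a trigonometric polynomial, hence entire and Lipschitz near $0$).

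The core of the argument is to bound $|\Xi|$ on $\Ome_{j,\eps}$ and then multiply by the decay of $\hatphi$ recorded by $\beta$. For $\ome\in\Ome_j$ one has $|\ome|\asymp\lambda^jK$, so the arguments $\ome/\lambda^i$ lie outside $[-\pi,\pi]$ only for the first $j+O(1)$ indices and collapse toward $0$ afterward. Since $\xi$ is $2\pi$-periodic on the real axis, each of the $j+O(1)$ ``large'' factors is at most $\norm{\xi}_{L_\infty([-\pi,\pi])}$, while the remaining tail is a convergent product bounded by a constant \emph{independent of $j$} (again because $\xi(0)=1$ and $\xi$ is Lipschitz near $0$). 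This gives $|\Xi(\ome)|\lesssim\norm{\xi}_{L_\infty([-\pi,\pi])}^{\,j}$ on $\Ome_j$. Now I would choose the strip width $\eps$ small enough that $\limsup_j j^{-1}\log_\lambda\norm{\hatphi}_{L_\infty(\Ome_{j,\eps})}$ is within any prescribed $\delta$ of $-\beta$ (this is precisely where the $\inf_{\eps>0}$ in the definition of $\beta$ is used), so that $\norm{\hatphi}_{L_\infty(\Ome_{j,\eps})}\lesssim\lambda^{-\beta j}$ on that strip. Multiplying yields
$$\norm{\hat{\widetilde\phi}}_{L_\infty(\Ome_{j,\eps})}\lesssim \lambda^{-(\beta-\log_\lambda\norm{\xi}_{L_\infty([-\pi,\pi])})\,j}=\lambda^{-(\alpha+1)\,j}.$$
Because $|\ome|\asymp\lambda^j$ on $\Ome_j$, this is exactly a decay of order $|\ome|^{-(\alpha+1)}$ along the dilation-$\lambda$ annuli, the extra unit in the exponent absorbing the $-1$ in the definition of $\alpha$.

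To finish, I would invoke the definition of $\mathcal{R}^\alpha$ in \cite{HR4}: membership is characterized through the very same $\inf_{\eps}\limsup_j$ annular-decay device applied to the (entire) Fourier transform of a compactly supported distribution, so the estimate above places $\widetilde\phi$ in $\mathcal{R}^\alpha$ directly, with no loss in the exponent since the decay rate of $\hat{\widetilde\phi}$ is lowered by $\Xi$ by exactly $\log_\lambda\norm{\xi}_{L_\infty([-\pi,\pi])}$. As $\alpha>0$, the class $\mathcal{R}^\alpha$ embeds into the continuous functions, and a compactly supported continuous function is automatically in $L_2(\RR)$, which gives the final assertion.

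I expect the main obstacle to be the complex-analytic bookkeeping on the strips $\Ome_{j,\eps}$ rather than the real-variable heuristic above. Specifically, one must verify that thickening $\ome$ to $\ome+i\tet$ with $|\tet|<\eps$ inflates each factor $\xi(\ome/\lambda^i)$ only by $e^{O(|\tet|/\lambda^i)}$, so that the imaginary contributions sum to an $O(\eps)$ correction across $i$ and do not perturb the exponent $\alpha+1$; and one must control the tail of $\Xi$ uniformly over all annuli so that the implied constants are genuinely $j$-independent. Matching these two estimates precisely to the definition of $\mathcal{R}^\alpha$ (where the compact support of $\widetilde\phi$, equivalently the entireness of $\hat{\widetilde\phi}$, is essential) is the step that requires following the arguments of \cite{HR4} closely.
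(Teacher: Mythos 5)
There is nothing in the paper to compare your argument against: the paper does not prove this theorem. The statement is quoted from \cite{HR4} (Proposition 4.5 there), and the authors explicitly omit the proof, remarking only that the $\lambda=2$ argument of \cite{HR4} extends to general integer dilation $\lambda\ge 2$ by following the original arguments closely. Your sketch can therefore only be judged as a reconstruction of the cited argument, and as such it takes the natural (and almost certainly the original) route: factor $\hat{\widetilde\phi}=\Xi\,\hatphi$ with $\Xi=\prod_{i\ge 0}\xi(\cdot/\lambda^i)$, bound the $j+O(1)$ ``large'' factors by $\norm{\xi}_{L_\infty([-\pi,\pi])}$ using periodicity together with a Bernstein-type strip estimate $|\xi(\ome+i\tet)|\le e^{N|\tet|}\norm{\xi}_{L_\infty(\RR)}$, bound the tail by a $j$-independent constant using $\xi(0)=1$ (which is indeed forced by $\widetilde\tau(0)=\tau(0)=1$), and read off the loss $\log_\lambda\norm{\xi}_{L_\infty([-\pi,\pi])}$ in the annular decay exponent. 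One cosmetic correction: derive the factorization by comparing the infinite products $\hatphi=\prod_{i\ge1}\tau(\cdot/\lambda^i)$ and $\hat{\widetilde\phi}=\prod_{i\ge1}\widetilde\tau(\cdot/\lambda^i)$ directly, rather than via the ratio $g=\hat{\widetilde\phi}/\hatphi$, which is undefined at zeros of $\hatphi$.

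The one substantive soft spot is that your displayed estimates silently drop the $\delta$-losses that the $\limsup$-based definition of $\beta$ forces. For fixed $\eps$, that definition yields only $\norm{\hatphi}_{L_\infty(\Ome_{j,\eps})}\le C_\delta\,\lambda^{(-\beta+\delta)j}$ for every $\delta>0$, never a uniform constant at $\delta=0$; consequently what your argument proves is $\norm{\hat{\widetilde\phi}}_{L_\infty(\Ome_{j,\eps})}\le C_\delta\,\lambda^{-(\alpha+1-\delta)j}$ for every $\delta>0$, i.e.\ that the $\beta$-parameter of $\widetilde\phi$ is at least $\alpha+1$ --- not the uniform bound $\lesssim\lambda^{-(\alpha+1)j}$ that you wrote. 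Whether this suffices for $\widetilde\phi\in\mathcal{R}^{\alpha}$, as opposed to $\widetilde\phi\in\mathcal{R}^{\alpha-\delta}$ for all small $\delta>0$, depends entirely on the exact definition of $\mathcal{R}^{\alpha}$, which neither the paper (it defers to \cite{HR4}) nor you state: your argument closes if $\mathcal{R}^{\alpha}$ is defined through the same non-strict $\inf_\eps\limsup_j$ device, but loses an arbitrarily small amount of smoothness if $\mathcal{R}^{\alpha}$ demands a uniform pointwise bound of the form $|\hat{f}(\ome+i\tet)|\le C(1+|\ome|)^{-1-\alpha}$ on a strip. You flag this dependence yourself, which is the honest reading of the situation, but be aware that it is precisely where your written bounds overstate what has been established. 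The final assertion is unaffected: since $\alpha>0$, either reading gives decay of $\hat{\widetilde\phi}$ on $\RR$ with exponent exceeding $1/2$, hence $\hat{\widetilde\phi}\in L_2(\RR)$ and $\widetilde\phi\in L_2(\RR)$.
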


\subsection{Examples and concluding remark}
\label{subS:example}
We now illustrate our results through some examples. 

{\noindent\bf Example 1 (1-D dyadic wavelet frames generated from Deslauriers-Dubuc functions): }
Let $\lambda=2$, and let $\phi\in L_2(\RR)$ be the Deslauriers-Dubuc (DD) interpolatory refinable function of order $2k$, supported on $[0,4k-2]$, for $k\in\NN$ \cite{DD}. Then, $\phi$ is stable with SF order $2k$, and with the choice of $\capset=\{0,1\}$ and $\dualset=\{0,\pi\}$, the associated $z$-transform and refinement mask are given as, respectively, 
$$H(z)=\sqrt{2}z^{-2k+1}\left({1\over 4}(z+2+z^{-1})\right)^kP_k\left(-{1\over 4}(z-2+z^{-1})\right),\quad z\in\TT,$$
$$\tau(\ome)=e^{-(2k-1)i\ome}\cos^{2k}({\ome\over 2})P_k(\sin^2({\ome\over 2})),\quad \ome\in[-\pi,\pi],$$
where
$$P_k(x)=\sum_{j=0}^{k-1}{\disp(k-1+j)!\over\disp j!(k-1)!}x^j,$$
and the components of the polyphase representation ${\tt H}(z)=[H_0(z),H_1(z)]^T$ satisfy
$$H_1(z)={1\over \sqrt{2}}z^{-k+1},\quad H_0(z^2)=H(z)-{1\over \sqrt{2}}z^{-2k+1}.$$

Since $z^{2k-1}H(z)+\overline{z}^{2k-1}H(\overline{z})=\sqrt{2}$, $z^{2k-1}H(z)\ge 0$, and $\overline{z}^{2k-1}H(\overline{z})\ge 0$, $\forall z\in\TT$, we see $0\le z^{2k-1}H(z)\le \sqrt{2}$, and $\left|H_0(z^2)\right|=\left|z^{2k-1}H(z)-{1/\sqrt{2}}\right|\le {1/\sqrt{2}}$, which in turn implies $|H_0(z)|\le {1/\sqrt{2}}$, $\forall z\in\TT$. Combining this with $|H_1(z)|={1/\sqrt{2}}$, we have 
\be
\label{eq:DDpositivity}
2-{\tt H}^\ast(z){\tt H}(z)=2-|H_0(z)|^2-|H_1(z)|^2={3\over 2}-|H_0(z)|^2\ge 1>0,\quad \forall z\in\TT.
\ee
Let $m_{\tt H}(z)$ be the Laurent polynomial satisfying $|m_{\tt H}(z)|^2=2-{\tt H}^\ast(z){\tt H}(z)$, whose existence is guaranteed by Lemma~\ref{lemma:FR}. Let $\widetilde\phi$ be the compactly supported refinable distribution associated with $\widetilde{\tau}=m_{\tt H}(e^{i2\cdot})\tau$. For the DD refinable function $\phi$ of order $2k$, the parameter $\beta$ in Theorem~\ref{thm:riesz} satisfies (see, for example, \cite{Da,HR4})
$$\beta\ge 2k-\log_2 P_{k}(3/4)\ge 
k(2-\log_23)+\log_23.$$
Since $|m_{\tt H}(e^{i\ome})|=\sqrt{2-{\tt H}^\ast(e^{i\ome}){\tt H}(e^{i\ome})}\le \sqrt{6}/2$, $\forall \ome\in[-\pi,\pi]$,
we see that
\begin{eqnarray*}
\alpha&\,{=}\,&\beta-\log_2\norm{m_{\tt H}(e^{i\cdot})}_{L_\infty[-\pi,\pi]}-1\\
&\ge& 2k-\log_2 P_{k}(3/4)-\log_2\sqrt{6}\ge k(2-\log_23)+{1\over 2}(\log_23-1)>0.
\end{eqnarray*}
and, by Theorem~\ref{thm:riesz}, $\widetilde\phi$ is in $L_2(\RR)$, for each $k\in\NN$. Hence by Corollary~\ref{coro:constructionW_oneD} we obtain a tight wavelet frame whose refinable function $\widetilde\phi$, where $\widetilde{\phi}$ is stable with SF order $2k$ and its support is contained in the interval $[0,8k-4]$. 

When $k=1$, the refinement mask is $\tau(\ome)=e^{-i\ome}\cos^2(\ome/2)=(1+2e^{-i\ome}+e^{-2i\ome})/4$
and the corresponding DD refinable function is the hat function: 
\bea
\label{eq:hat}
\phi(x)=\cases{x, & if $0\le x\le 1$,\cr
2-x,& if $1\le x\le 2$, \cr
0,& otherwise.
}
\eea
After applying our method as suggested above, we get the new refinement mask 
\begin{eqnarray*}
&\,{}\,&\widetilde\tau(\ome)=e^{-i\ome}\cos^2(\ome/2)\left({{2+\sqrt{6}}\over {4}}+{{2-\sqrt{6}}\over {4}}e^{-2i\ome}\right)\\
&=&{{2+\sqrt{6}}\over 16}+{{2+\sqrt{6}}\over 8}e^{-i\ome}+{1\over 4}e^{-2i\ome}+{{2-\sqrt{6}}\over 8}e^{-3i\ome}+{{2-\sqrt{6}}\over 16}e^{-4i\ome}.
\end{eqnarray*}
The new refinable function $\widetilde\phi$ associated with $\widetilde\tau$ is depicted in Fig.~\ref{figure:tildehat}, together with the original refinable function $\phi$.
\qquad\endproof

\begin{figure}
\begin{center}
\includegraphics[scale=0.405]{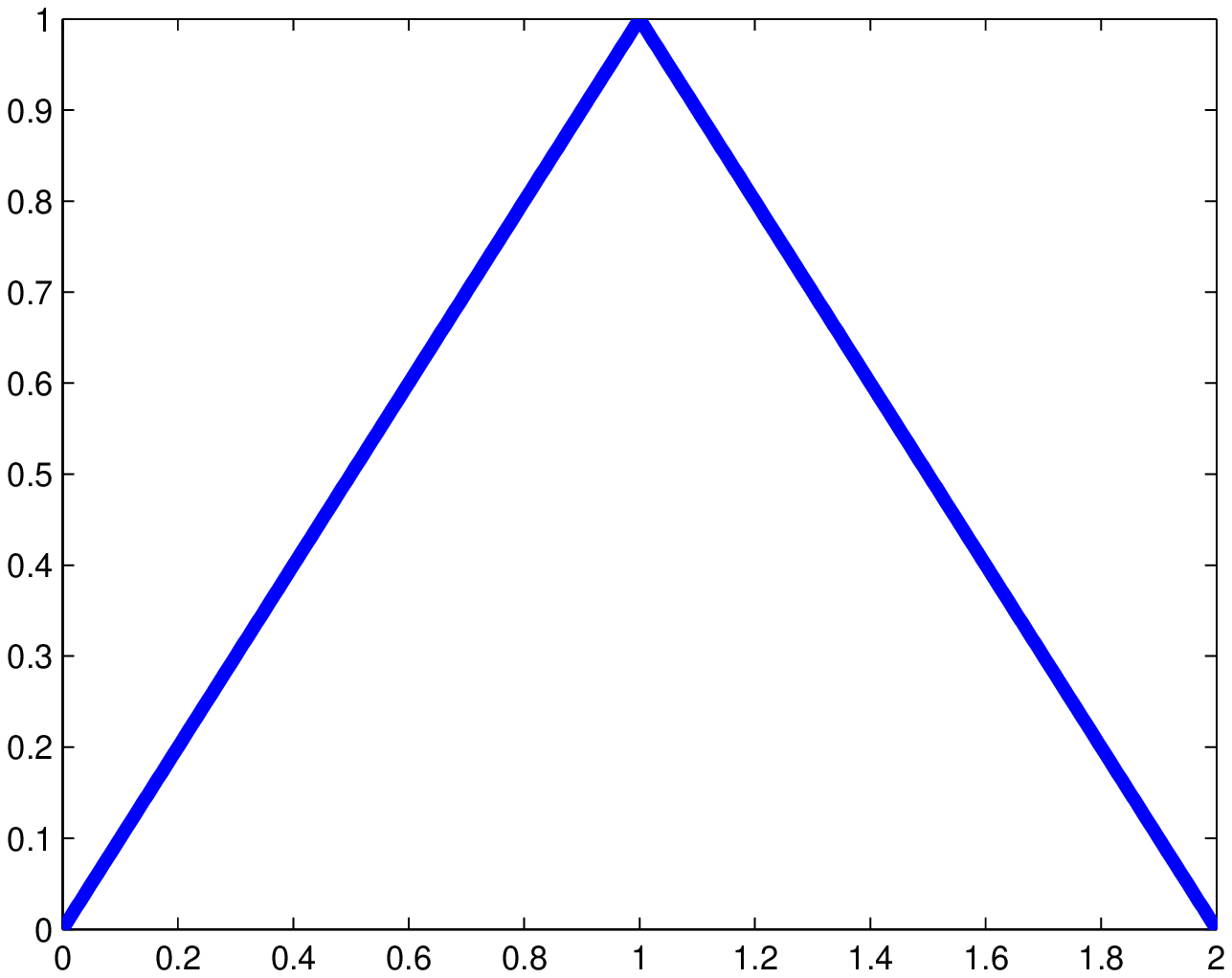}
\includegraphics[scale=0.405]{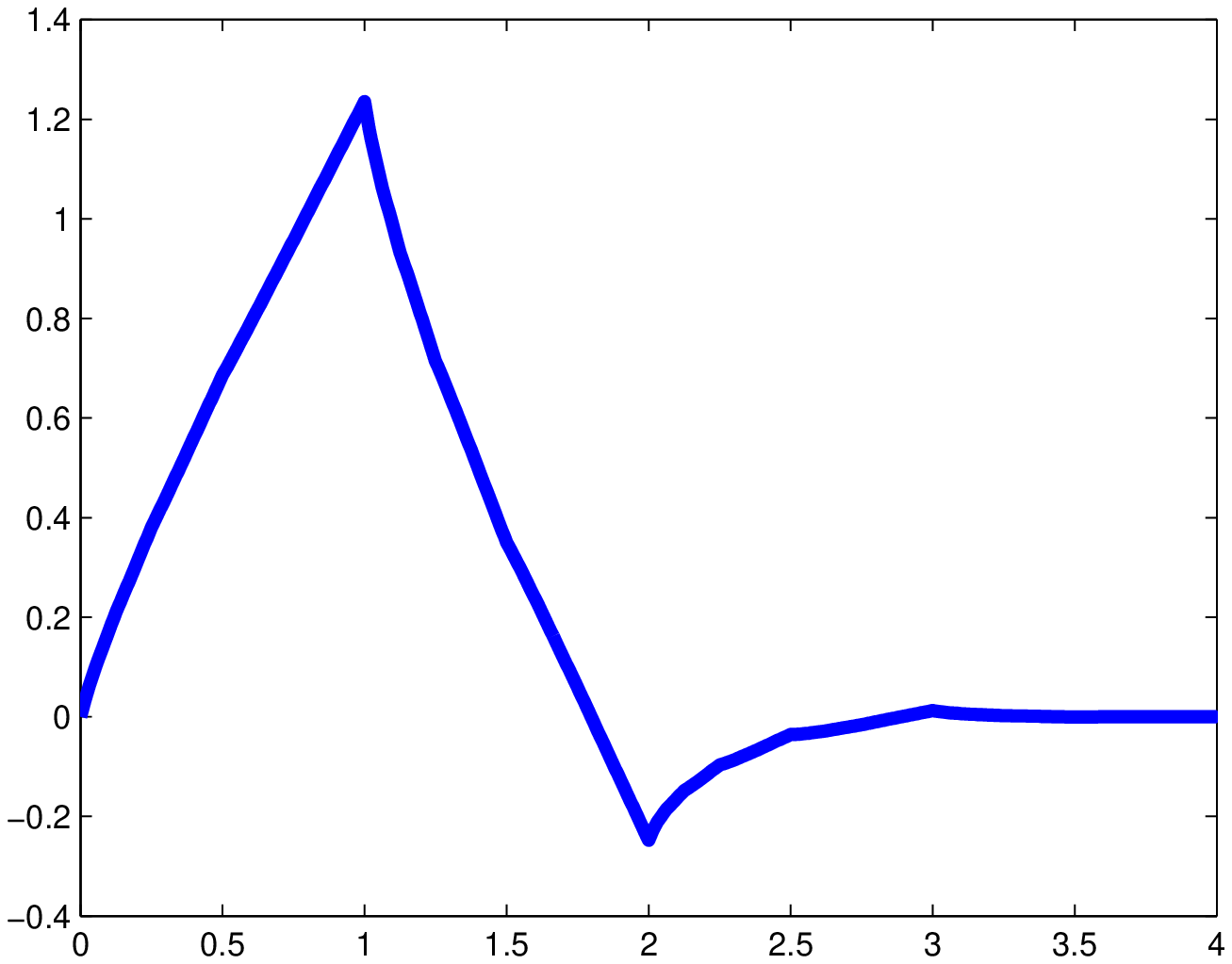}
\caption{The original ($\phi$, left) and the new ($\widetilde\phi$, right) refinable functions of Example 1 for $k=1$.}
\label{figure:tildehat}
\end{center}
\end{figure}

{\noindent\bf Example 2 (1-D dyadic wavelet frames generated from the B-splines): }
Still taking $\lambda=2$, $\capset=\{0,1\}$, and $\dualset=\{0,\pi\}$, we let $\phi\in L_2(\RR)$ be the B-spline of order $k$, supported on $[0,k]$, $k\in\NN$, whose associated refinement mask is
$$\tau(\ome)=\left({{1+e^{-i\ome}}\over 2}\right)^k,\quad \ome\in[-\pi,\pi].$$
It is well known that $\phi$ is stable with SF order $k$. 

Since 
$$|\tau(\ome)|^2+|\tau(\ome+\pi)|^2=\cos^{2k}\left({\ome\over 2}\right)+\sin^{2k}\left({\ome\over 2}\right)\le \left(\cos^{2}\left({\ome\over 2}\right)+\sin^{2}\left({\ome\over 2}\right)\right)^{k}=1<2,$$
by invoking Lemma~\ref{lemma:HstarH}, we see that $2-{\tt H}^\ast(z){\tt H}(z)>0$, $\forall z\in\TT$, is satisfied, for each $k\in\NN$, where ${\tt H}(z)$ is the associated polyphase representation. 

For the rest of this example, we assume that $k\ge 3$, since, when $k=1$, $\phi$ is the Haar refinable function which does not need any scaling to produce a tight wavelet frame, and when $k=2$, $\phi$ is the hat function that we discussed already in Example 1. 

Let $\widetilde\phi$ be the refinable distribution associated with the new refinement mask $\widetilde{\tau}(\ome)=m_{\tt H}(e^{i2\ome})\tau(\ome)$, where $|m_{\tt H}(z)|^2=2-{\tt H}^\ast(z){\tt H}(z)$, $\forall z\in\TT$. For the B-spline refinable function $\phi$ of order $k$, it is easy to see that
the parameter $\beta$ in Theorem~\ref{thm:riesz} satisfies $\beta\ge k$.
By combining this with $\norm{m_{\tt H}(e^{i\cdot})}_{L_\infty[-\pi,\pi]}\le\sqrt{2}$, we get that, by Theorem~\ref{thm:riesz}, the refinable distribution $\widetilde\phi\in \mathcal{R}^{k-1.5}$. In particular, $\widetilde{\phi}$ is in $L_2(\RR)$, for each $k\ge 3$. Thus, by Corollary~\ref{coro:constructionW_oneD}, we see that $[m_{\tt H}(z){\tt H}(z), {\tt I}- {\tt H}(z){\tt H}^\ast(z)]$ gives rise to a tight wavelet frame, whose refinable function $\widetilde{\phi}$ is stable with SF order $k$, and with its support contained in $[0,2k]$. 

When $k=3$, $\tau(\ome)=(1+e^{-i\ome})^3/8=(1+3e^{-i\ome}+3e^{-2i\ome}+e^{-3i\ome})/8$ and the corresponding refinable function $\phi$ is the cubic B-spline supported on $[0,3]$. Our scaling process produces the new refinement mask 
\begin{eqnarray*}
\widetilde{\tau}(\ome)&\,{=}\,&{1+3e^{-i\ome}+3e^{-2i\ome}+e^{-3i\ome}\over 8}\left({{2+\sqrt{7}}\over 4}+{{2-\sqrt{7}}\over 4}e^{-2i\ome}\right)\\
&=&{{2+\sqrt{7}}\over 32}+{{6+3\sqrt{7}}\over 32}e^{-i\ome}+{{8+2\sqrt{7}}\over 32}e^{-2i\ome}\\
&\quad&\,+\,{{8-2\sqrt{7}}\over 32}e^{-3i\ome}+{{6-3\sqrt{7}}\over 32}e^{-4i\ome}+{{2-\sqrt{7}}\over 32}e^{-5i\ome}
\end{eqnarray*}
and the corresponding refinable function $\widetilde{\phi}$ is depicted in Fig.~\ref{figure:tildeB3}, together with the original refinable function $\phi$.
\qquad\endproof

\begin{figure}
\begin{center}
\includegraphics[scale=0.405]{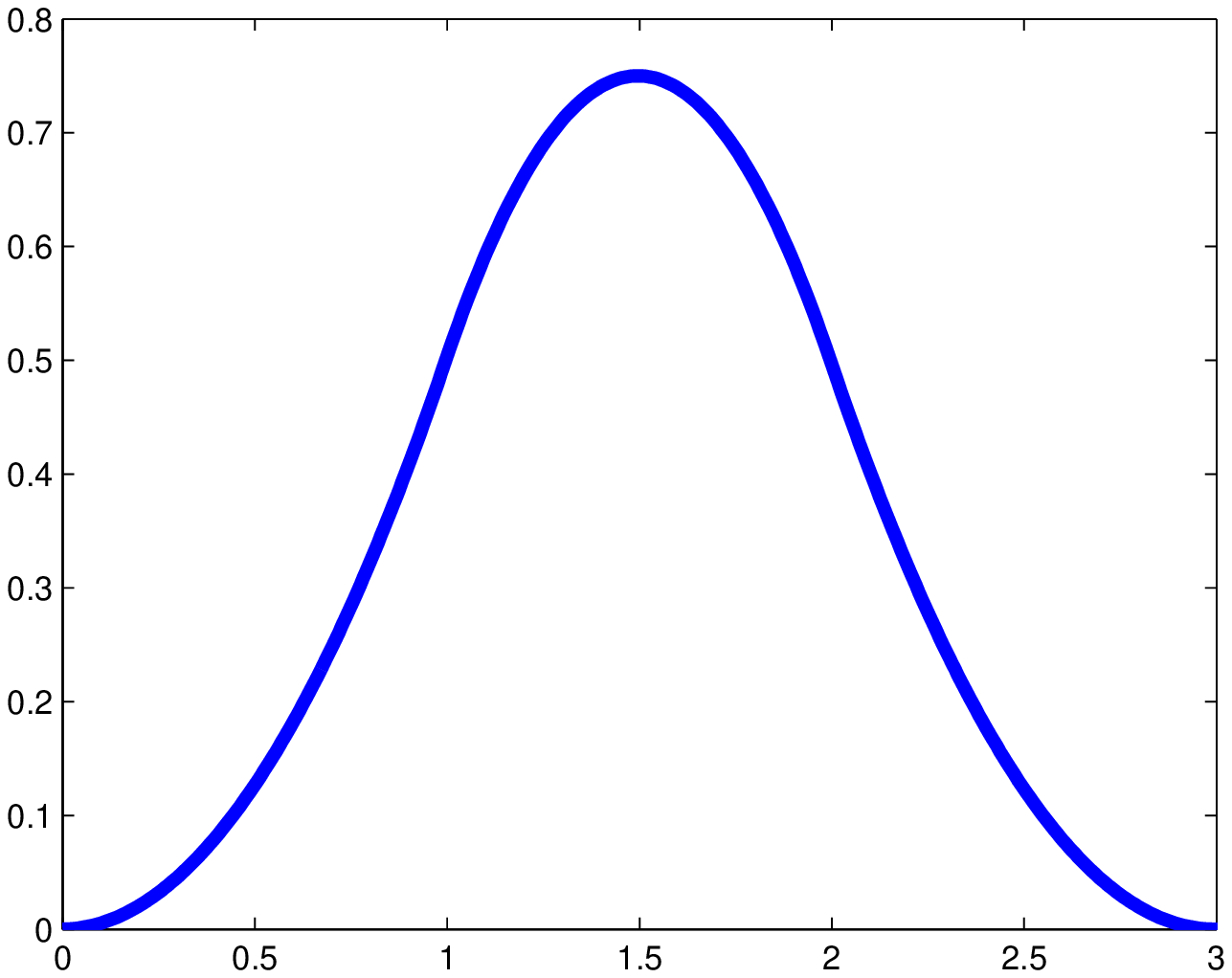}
\includegraphics[scale=0.405]{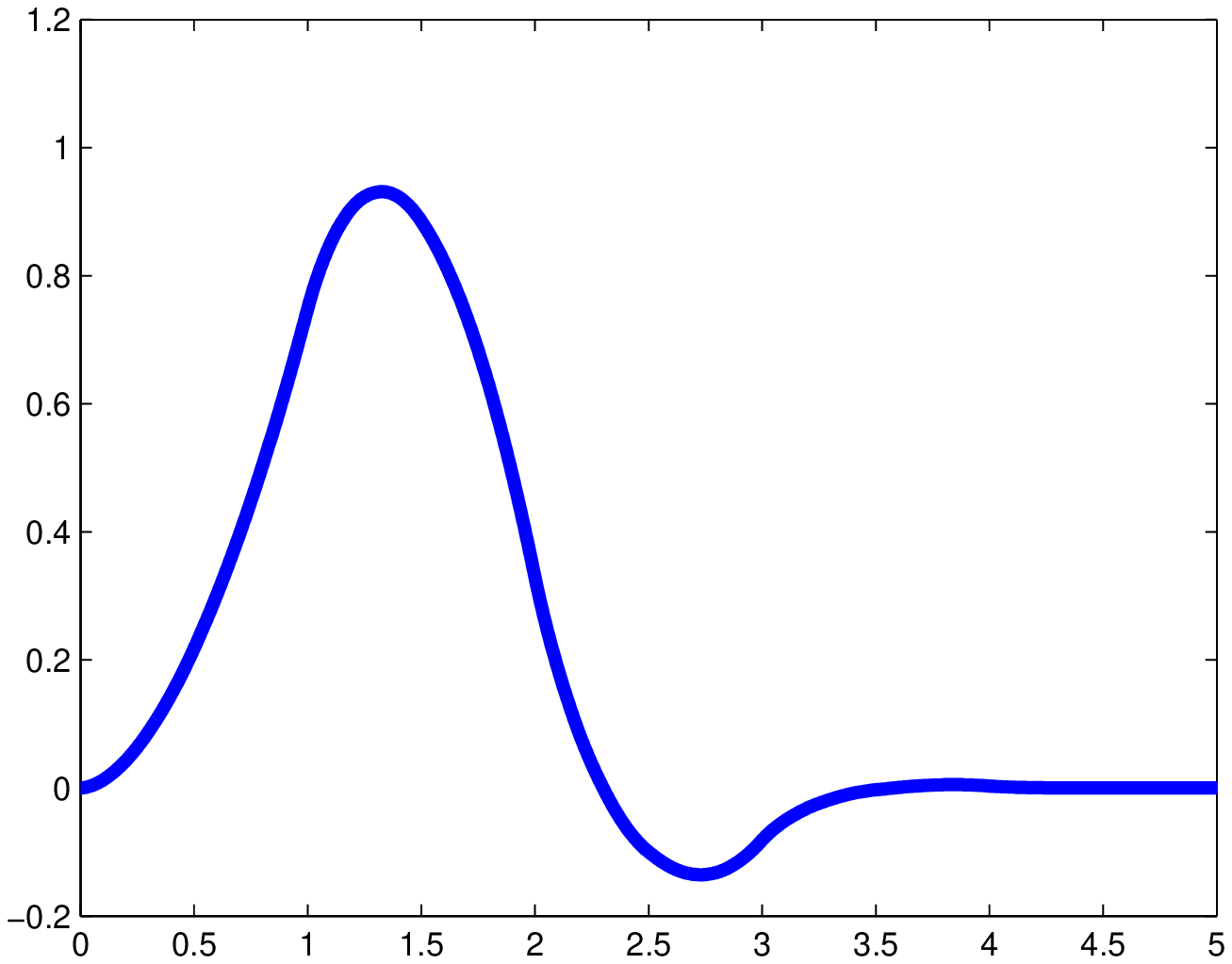}
\caption{The original ($\phi$, left) and the new ($\widetilde\phi$, right) refinable functions of Example 2 for $k=3$.}
\label{figure:tildeB3}
\end{center}
\end{figure}

{\noindent\bf Example 3 (1-D non-dyadic wavelet frames generated from the hat function):}
Let us now consider non-dyadic dilations, i.e. $\lambda\ge 3$.
We let $\phi\in L_2(\RR)$ be the hat function in (\ref{eq:hat}), which we considered in Example 1 for the dyadic case, i.e. $\lambda=2$. We know that $\phi$ is stable with SF order $2$ and is supported on $[0,2]$. Let $\lambda\ge 3$ be the 1-D non-dyadic integer dilation factor. We take $\capset=\{0,1,\dots,\lambda-1\}$, and $\dualset=\{0,{1\over\lambda}2\pi,\dots,{(\lambda-1)\over\lambda}2\pi\}$.  Let $\tau$  be the associated refinement mask with dilation $\lambda$. Then $\tau$ is given as, $\forall\ome\in[-\pi,\pi]$,
\begin{eqnarray*}
\tau(\ome)&\,{=}\,&{1\over \lambda^2}e^{-i(\lambda-1)\ome}\left(e^{i(\lambda-1)\ome}+2e^{i(\lambda-2)\ome}+\cdots+(\lambda-1)e^{i\ome}+\lambda\right.\\
&\quad&\quad\quad\quad\quad\quad\quad \left.+(\lambda-1)e^{-i\ome}+\cdots+2e^{-i(\lambda-2)\ome}+e^{-i(\lambda-1)\ome}\right).
\end{eqnarray*}

Since $\sum_{\gam\in\dualset} e^{i(\lambda-1)(\ome+\gam)}\tau(\ome+\gam)=1$ and $0\le e^{i(\lambda-1)\ome}\tau(\ome)\le1$, for all $\ome\in[-\pi,\pi]$, we have that
$|\tau(\ome+\gam)|^2\le e^{i(\lambda-1)(\ome+\gam)}\tau(\ome+\gam)$, $\forall\ome\in[-\pi,\pi]$, $\forall\gam\in\dualset$. From this and Lemma~\ref{lemma:HstarH}, the polyphase representation ${\tt H}(z)$ satisfies, for $\ome\in[-\pi,\pi]$,
$${\tt H}^\ast(e^{i\lambda\ome}){\tt H}(e^{i\lambda\ome})=\sum_{\gam\in\dualset} |\tau(\ome+\gam)|^2\le \sum_{\gam\in\dualset}e^{i(\lambda-1)(\ome+\gam)}\tau(\ome+\gam)=1<2.$$
Thus, by Lemma~\ref{lemma:FR}, there exists a Laurent polynomial $m_{\tt H}(z)$ satisfying $|m_{\tt H}(z)|^2=2-{\tt H}^\ast(z){\tt H}(z)$. Let $\widetilde{\phi}$ be the compactly supported refinable distribution associated with $\widetilde{\tau}(\ome)=m_{\tt H}(e^{i\lambda\ome})\tau(\ome)$. Using the facts that the last component of the polyphase representation ${\tt H}(z)$ is $H_{\lambda-1}(z)={1\over \sqrt{\lambda}}$ which implies that $|m_{\tt H}(e^{i\ome})|\le \sqrt{2-{1\over \lambda}}$, $\forall\ome\in[-\pi,\pi]$, and that the parameter $\beta$ of Theorem~\ref{thm:riesz} in this case satisfies $\beta\ge 2$, we have
$$\alpha=\beta-\log_\lambda\norm{m_{\tt H}(e^{i\cdot})}_{L_\infty[-\pi,\pi]}-1\ge 1-{1\over 2}\log_\lambda\left(2-{1\over \lambda}\right)\ge 1-{1\over 2}\log_3\left(2-{1\over 3}\right)>0,$$
and as a result, $\widetilde{\phi}\in \mathcal{R}^{1-{1\over 2}\log_\lambda(2-{1\over \lambda})}$, for each $\lambda\ge 3$. In particular, $\widetilde{\phi}\in L^2(\RR)$, for each $\lambda\ge 3$. Thus, by Corollary~\ref{coro:constructionW_oneD}, the tight wavelet filter bank in Theorem~\ref{thm:construction_oneD} gives rise to a 1-D tight wavelet frame associated with the stable $L^2$-function $\widetilde\phi$ of SF order $2$.

\begin{figure}
\begin{center}
\includegraphics[scale=0.405]{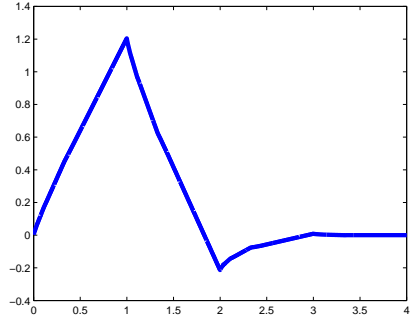}
\caption{The new refinable function $\widetilde\phi$ of Example 3 for $\lambda=3$.}
\label{figure:tildehatlam3}
\end{center}
\end{figure}

When $\lambda=3$, the new refinement mask is given as 
\begin{eqnarray*}
&\,{}\,&\widetilde{\tau}(\ome)={{3\sqrt{3}+\sqrt{43}}\over 54\sqrt{3}}+{{6\sqrt{3}+2\sqrt{43}}\over 54\sqrt{3}}e^{-i\ome}+{{9\sqrt{3}+3\sqrt{43}}\over 54\sqrt{3}}e^{-2i\ome}+{{9\sqrt{3}+\sqrt{43}}\over 54\sqrt{3}}e^{-3i\ome}\\
&&\,+\,{{9\sqrt{3}-\sqrt{43}}\over 54\sqrt{3}}e^{-4i\ome}+{{9\sqrt{3}-3\sqrt{43}}\over 54\sqrt{3}}e^{-5i\ome}+{{6\sqrt{3}-2\sqrt{43}}\over 54\sqrt{3}}e^{-6i\ome}+{{3\sqrt{3}-\sqrt{43}}\over 54\sqrt{3}}e^{-7i\ome}
\end{eqnarray*} 
and the graph of the new refinable function $\widetilde{\phi}$ is placed in Fig.~\ref{figure:tildehatlam3}. The graph of the original refinable function $\phi$ (i.e. the hat function) that gives rise to this new refinable function $\widetilde{\phi}$ can be found in Fig.~\ref{figure:tildehat}. Although the graphs of $\widetilde{\phi}$ in Fig.~\ref{figure:tildehatlam3} and ~\ref{figure:tildehat} may look similar, the two graphs are not the same, which can be verified by comparing the values of $\widetilde{\phi}$ over the interval [2,3], for example.
\qquad\endproof

\section{Conclusion}
In conclusion, this paper extends  the concept of scalability to matrices with Laurent polynomial entries and identifies when the  class of  LP$^2$ matrices, are scalable (cf.~Section~\ref{S:LPmain}). Using these results, we developed a new methodology for constructing tight wavelet filter banks and tight wavelet frames (cf.~Section~\ref{S:wavelet}). We illustrated our construction method for 1-D case by appealing to the Fej\'er-Riesz lemma (cf.~Lemma~\ref{lemma:FR}).

\bibliographystyle{siam} 
\bibliography{IEEEabrv,mybibfile}

\end{document}